\documentclass[a4paper,12pt,]{article}
\usepackage{hyperref}
\usepackage{amsfonts}
\usepackage{amsmath}
\usepackage{amssymb}
\usepackage{amsthm}
\usepackage{enumerate}
\usepackage[mathscr]{eucal}
\usepackage{eqlist}
\usepackage{graphicx}
\usepackage{esint}
\usepackage{color}
\usepackage{booktabs} 
\usepackage{multirow} 

\usepackage[body={15.6true cm, 24.9true cm}]{geometry}

\newtheorem{theorem}{\hskip\parindent\bf Theorem}[section]
\newtheorem{lemma}{\hskip\parindent\bf Lemma}[section]
\newtheorem{remark}{\hskip\parindent\bf Remark}[section]
\newtheorem{definition}{\hskip\parindent\bf Definition}[section]
\numberwithin{equation}{section}
\numberwithin{equation}{section} \allowdisplaybreaks
\usepackage{mathdots}

\renewcommand\abstract{{\bf Abstract}}

\begin{document}

\title {\LARGE \bf A Critical Case for the Near Field Refraction Problem in Negative Refractive Index Material}

\author{Yi He$^a$,~~Feida Jiang$^{a,b,}$\thanks{Corresponding author.
E-mail address: jiangfeida@seu.edu.cn},~~Haokun Sui$^a$}
\maketitle
\begin{center}
\begin{minipage}{12cm}
\begin{description}
\item \small
 $a$.~School of Mathematics and Shing-Tung Yau Center of Southeast University, Southeast University, Nanjing 211189, P.R. China
 \item \small
$b$.~Shanghai Institute for Mathematics and Interdisciplinary Sciences, Shanghai 200433, P.R. China
\end{description}
\end{minipage}
\end{center}


	

\begin{abstract}{\bf:}{\footnotesize
~This paper studies the near field refraction problem for the critical case in negative refractive index material. Snell law in vector form is first reviewed, and the physical constraint governing refraction is discussed. We then define the refractor for the critical case and analyze several of its key properties. Building on these properties, we introduce the notion of a weak solution to this problem and prove the existence of the weak solution when the target measure is either discrete or a finite Radon measure. The proof relies on the Minkowski method, an iterative approach to refraction and reflection problems in geometric optics.  }		
\end{abstract}
	
{\bf Key Words:} Negative refraction; Near field; Weak solution; Critical case
	
{{\bf 2010 Mathematics Subject Classification.} Primary: 35Q60, 78A05; Secondary: 35J96.}

\tableofcontents

\section{Introduction}
\label{sec1}

\sloppy{}

Negative refraction is a unique electromagnetic phenomenon whose theoretical foundations can be traced back to Veselago in 1968 \cite{Ve68}. In conventional optical media, the incident and refracted rays lie on opposite sides of the interface normal within the plane of incidence. In contrast, when an electromagnetic wave passes from a positive refractive index material into a negative refractive index material, the incident and refracted rays lie on the same side of the interface normal, giving rise to the phenomenon known as negative refraction. Although this phenomenon had been theoretically predicted, direct experimental verification remained elusive for more than three decades. In 2000, Smith et al.\cite{SP00} developed an artificial composite metamaterial that exhibited simultaneously negative effective permittivity and effective permeability in the microwave regime, bringing negative refraction closer to experimental vrealization. In 2001, Shelby et al. \cite{SS01} subsequently observed negative refraction directly through a prism experiment using such artificial materials. Since the beginning of the 21st century, the distinctive optical responses of negative refractive index materials have motivated research into various applications, including optical device design\cite{CT05}, lens imaging\cite{PR02}\cite{Pen00}, directional control of electromagnetic radiation\cite{ET02} and the development of novel optical devices\cite{LH04}.

Near field refraction problem has attracted considerable attention at the intersection of geometric optics and partial differential equations. Near field refraction problem is formulated as follows: given a point source in medium \textsc{I} and a target region in medium \textsc{II} at a finite distance from the source, one seeks an interface $\mathcal{R}$ separating the two media such that the emitted rays are refracted onto a specific point in the target region. In 2014, Gutiérrez and Huang\cite{GH14} studied the point source near field refraction problem using a framework based on continuous mappings and measure equations, and established the existence of weak solution under suitable assumptions. Other approaches have also been developed to address the existence of the weak solution to the near field refraction problem. In 2021, Liu and Wang\cite{LW21} showed that both point sources and parallel sources near field refraction problem without loss of energy can be formulated as nonlinear optimization problems, thereby providing an alternative approach to proving the existence of the weak solution.

Besides existence of the weak solution, the regularity of weak solution and the numerical solution of the near field refraction problem are also important problems in the study of near field refraction problem. In 2015, Gutiérrez and Tournier\cite{GT15} established local $C^{1,\alpha}$ estimates for parallel sources near field reflection and refraction problem, yielding local regularity results for the corresponding weak solutions. In 2021, Gutiérrez and Tournier\cite{GT21} extended local $C^{1,\alpha}$ estimates to the point source near field refraction problem. In the same year, Gutiérrez and Mawi\cite{GM21} developed a numerical method for the point source near field refraction problem and established that the algorithm terminates after finitely many iterations for a prescribed accuracy. Further results on near field refraction problem can be found in\cite{Ji23}\cite{JT14}\cite{JT18}\cite{Tr21}. It is worth noting that the aforementioned studies primarily concern media with positive refractive indices. With the development of negative refractive index material, researchers have increasingly turned their attention to near field refraction problem in negative refractive index material.

For refraction problem in negative refractive index material, Gutiérrez and Stachura\cite{GS15} derived a form of Snell law suitable for negative refractive index material and extending the near field refraction problem from positive refractive index media to negative refractive index media. In 2016, Gutiérrez and Stachura\cite{GS16} further investigated near field refraction involving two interfaces in negative refractive index media. In 2017, Stachura\cite{St17} employed a Minkowski method to study the existence of weak solutions to the near field refraction problem without loss of energy in negative refractive index material. In 2026, Jiang and Sui \cite{JS26}, the second and third authors of this paper, established the existence of weak solutions to the near field refraction problem with loss of energy loss in negative refraction material.These studies indicate that the geometric construction of refracting surfaces depends strongly on the relative refractive index of the two media, and that different parameter regimes require corresponding geometric analyses.

In this paper, we consider the following problem: Suppose that $\Omega$ is a domain in $S^{n-1}$ and $D$ is a set contained in a $n-1$  dimensional hypersurface, $f$ and $g$ are two integrable functions on $\Omega$ and $D$ respectively, that is, $f\in L^{1}(\Omega)$, $g\in L^{1}(D)$. Consider two homogeneous, isotropic media: medium \textsc{I} and medium \textsc{II}, surrounded by  $\Omega$ and $D$ respectively which have different optical densities. Given a point $P\in D$, we want to construct a surface $\mathcal{R}$ separating media \textsc{I} and \textsc{II}, such that all rays emitted from the original point $O\in \Omega$ in medium \textsc{I} with intensity $f(x)$, $x\in \Omega$, are refracted by $\mathcal{R}$ to a specific point $P \in D$ in medium \textsc{II} and the corresponding illumination intensity is $g(P)$. Assuming that the refractive index of medium \textsc{I} is $n_{1}>0$, the refractive index of medium \textsc{II} is $n_{2}<0$, and set the relative refractive index $\kappa= \dfrac{n_{2}}{n_{1}}$, so $\kappa<0$. Notice that in application, it is natural to study the refraction problem in $S^{2}$, see Figure \ref{fig1}. However, in this paper, we directly study the problem in $S^{n-1}$ with $n \geq 2$ for its generality. Previous researches only consider the cases when $\kappa < -1$ and $-1 < \kappa < 0$, that is, the refracted light lying close to the normal and departing from the normal. In positive refractive index material, if $\kappa = 1$, then the refraction angle equals incident angle, hence the refraction phenomenon doesnot occur. However, in negative refractive index material, if we set $n_{2} = -n_{1}$, then $\kappa = -1$. In this case, the refraction angle also equals incident angle. Since the direction of light propagation has changed, the negative refraction phenomenon also occurs, see Figure \ref{fig2}. So this case can be seen as a critical case in  negative refraction problem. This paper will prove the existence of the weak solution to the near field refraction problem for the case $\kappa = -1$.

\begin{figure}[h]
  \centering
  \includegraphics[width=8.3cm]{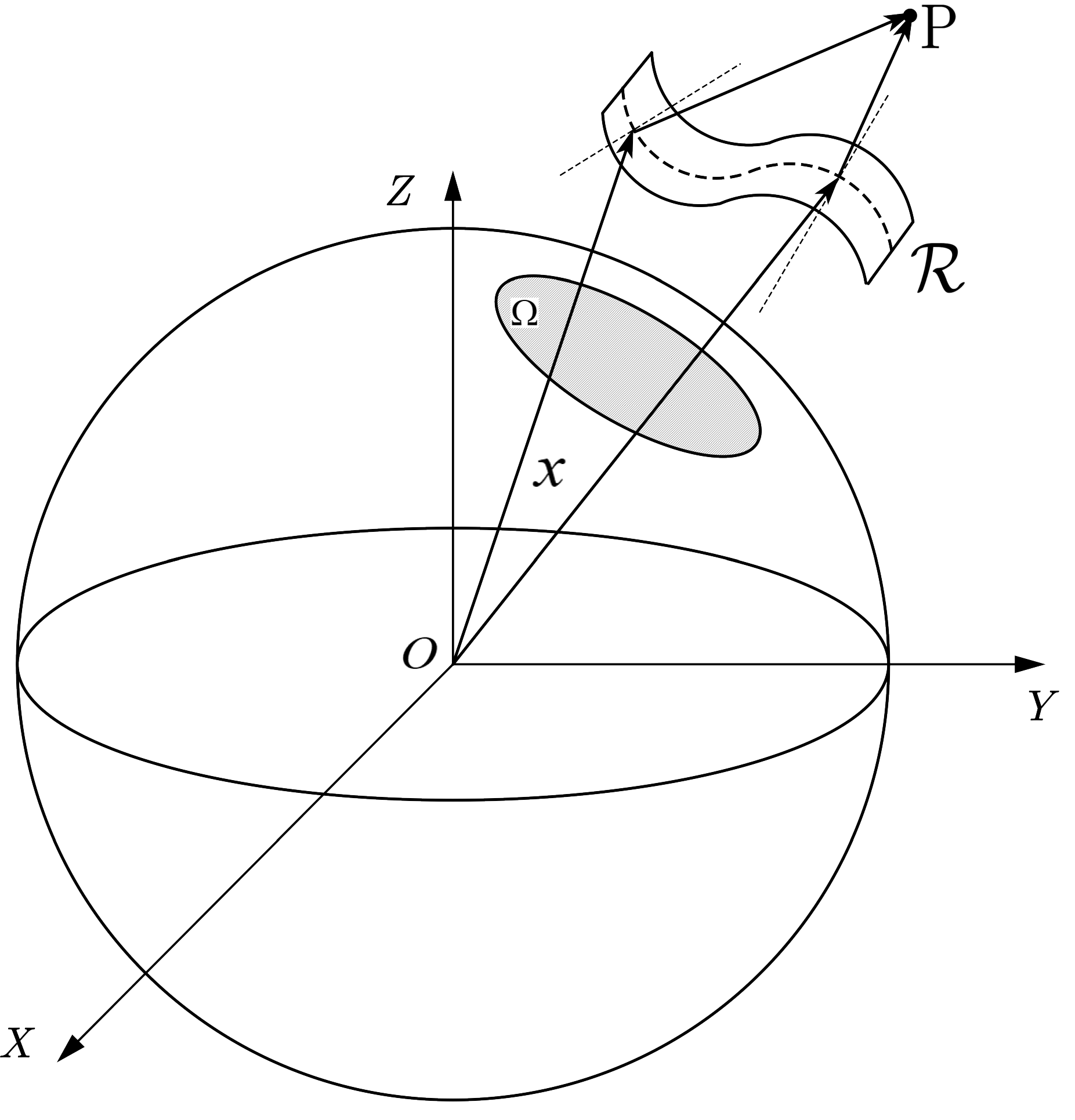}
\caption{Statement of the near field refraction problem in $S^{2}$.}\label{fig1}
\end{figure} 

\begin{figure}[h]
  \centering
  \includegraphics[width=8.3cm]{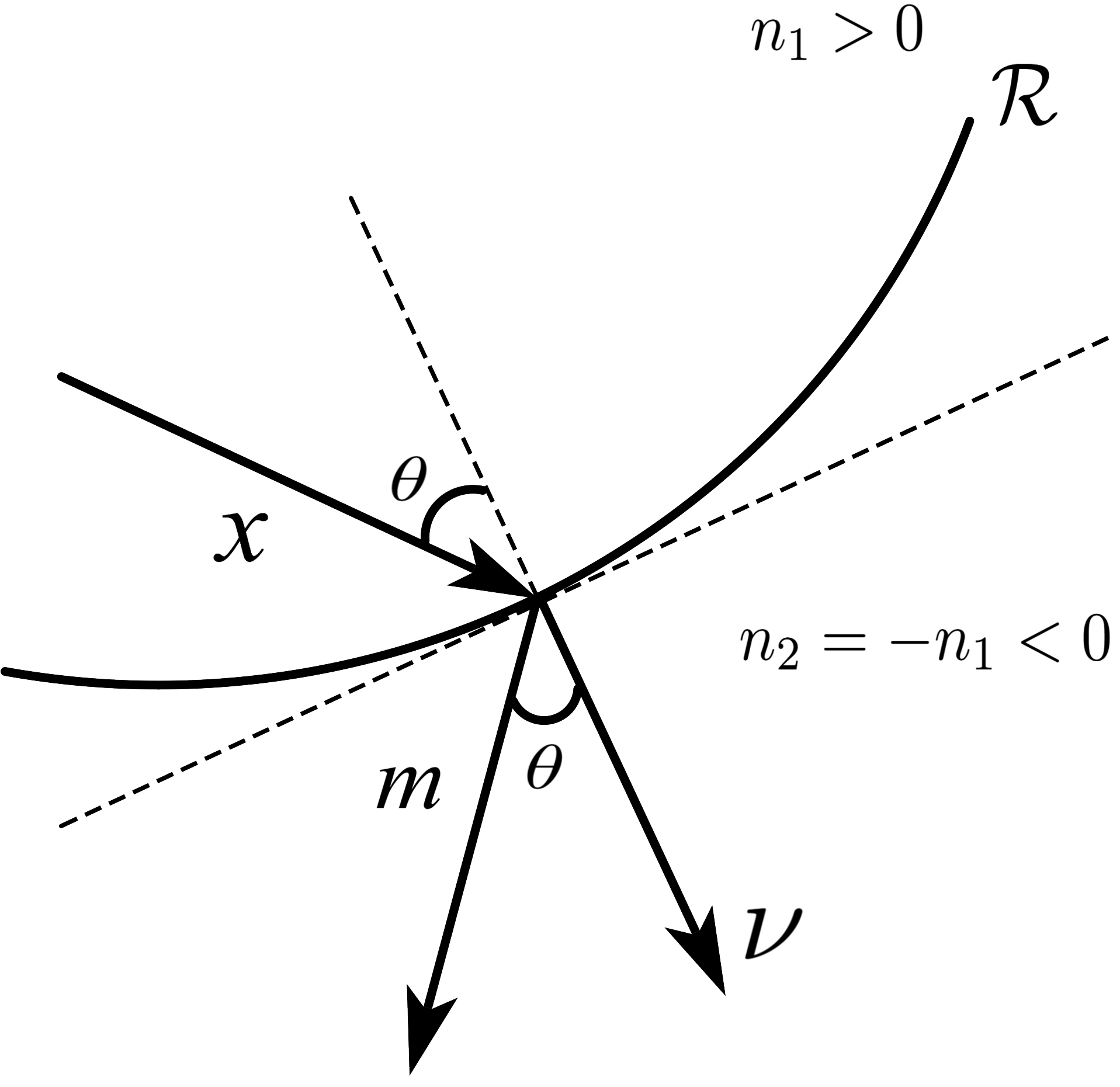}
\caption{Sketch of the refraction when $\kappa = -1$.}\label{fig2}
\end{figure} 

Now we state the main results of this paper.

\begin{theorem}
  Suppose that for any $x \in \bar{\Omega}$ and $P \in \bar{D}$, there holds $x \cdot \dfrac{P}{\vert P \vert} > -1$. Also suppose that $f \in L^{1}(\bar{\Omega})$ with $\inf\limits_{\bar{\Omega}}f > 0$, and for $j = 2,\ldots,m$, the distance between $P_{1}$ and $P_{j}$ is sufficient large. Let $\mu$ be a Radon measure defined on $\bar{D}$ and the condition $\displaystyle\int_{\bar{\Omega}}f(x)dx = \mu(\bar{D})$ holds.
   Then the weak solution to the near field refraction problem with emitting illumination intensity $f(x)$ and prescribe refracted illumination intensity $\mu$ for the case $\kappa = -1$ exists.
  \label{thm1}
\end{theorem}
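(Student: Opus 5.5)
To prove Theorem \ref{thm1}, I will follow the Minkowski-type scheme of Stachura and Jiang--Sui, adapted to $\kappa=-1$.

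\textbf{Step 1: the basic surface.} For $\kappa=-1$, Snell's law in vector form, $x-\kappa m=\lambda\nu$, becomes $x+m=\lambda\nu$. So the refracted direction $m$ is the reflection of $-x$ across the tangent plane. The physical constraint then reduces to $x\cdot m>-1$, which is where the hypothesis $x\cdot P/|P|>-1$ enters. For a fixed target $P$, the surface refracting every ray from $O$ to $P$ is determined by the constancy of the optical path. Since $n_2=-n_1$, this gives $|X|-|P-X|=\text{const}$, with $X=\rho(x)x$. So the basic refractors are sheets of hyperboloids of revolution with foci $O$ and $P$ (a degenerate, "critical" branch), written as
\[
\rho(x)=\frac{b^2-|P|^2}{2(b-x\cdot P)},
\]
up to the sign convention. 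I would record monotonicity of $\rho$ in the parameter $b$, continuity in $b$, and the limits of $\rho$ as $b$ tends to the ends of its admissible range. The admissible range is the one on which $\rho>0$ and the constraint holds uniformly on $\bar\Omega$, using the compactness margin $\inf x\cdot P/|P|>-1$. A refractor is a surface supported at every point by such a hyperboloid, in the sense of the paper's definition. Its refractor map $\mathcal{T}$ yields a Borel measure $G(E)=\int_{\mathcal{T}^{-1}(E)}f\,dx$, and a weak solution satisfies $G=\mu$.

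\textbf{Step 2: the discrete case.} Take $\mu=\sum_{j=1}^m g_j\delta_{P_j}$ and $\rho=\min_j\rho_{b_j}(\cdot,P_j)$ (or the max, depending on which side supports). Fix $b_1$ and consider the set $W$ of vectors $(b_2,\dots,b_m)$ in the admissible range with $G(\{P_j\})\le g_j$ for $j\ge2$. I would show $W$ is nonempty: by taking $b_j$ near the end of the range where $\rho_{b_j}$ is huge (or tiny, as appropriate), the $j$-th hyperboloid is never active. This is where the hypothesis that $|P_1-P_j|$ is large is used, so that these extreme values stay admissible while the first sheet still covers $\bar\Omega$. Next I would show $W$ is closed and that $G$ depends continuously on $b$. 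Continuity uses that the sets where two distinct hyperboloid sheets cross have measure zero, together with $f\in L^1$. Then take an extremal point of $W$, e.g. one minimizing $\sum_j b_j$. If some $G(\{P_j\})<g_j$, decreasing $b_j$ slightly keeps us in $W$ by monotonicity and continuity, contradicting extremality. Hence equality holds for $j\ge2$, and mass balance gives it for $j=1$. The condition $\inf f>0$ ensures that a sheet which is active on a set of positive measure carries positive mass, which keeps the extremal point away from degenerate boundaries.

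\textbf{Step 3: general Radon $\mu$.} I would approximate $\mu$ weakly by discrete measures $\mu_k$ supported on finer partitions of $\bar D$ with the same total mass. The resulting refractors $\rho_k$ are uniformly bounded above and below, via the bounds on the admissible parameters, and uniformly Lipschitz, since each is a min of smooth hyperboloids with controlled gradients. Arzel\`a--Ascoli then gives a limit $\rho$. I would show that $\rho$ is again a refractor and that $G_{\rho_k}\to G_\rho$ weakly. This uses that $\mathcal{T}_k$ converges to $\mathcal T$ at points of single-valuedness, which form a full-measure set, and that closed and open sets behave upper- and lower-semicontinuously. Hence $G_\rho=\mu$.

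The \textbf{main obstacle} is the uniform a priori bound in Steps 2 and 3: showing that the parameters $b_j$ remain in a compact part of the admissible range independently of the discretization. The hyperbolic sheet degenerates when $b\to|P|$. I would first try to compare each $\rho_{b_j}$ with $\rho_{b_1}$ at a single direction, using the separation hypothesis on $|P_1-P_j|$ and the margin in $x\cdot P/|P|>-1$.
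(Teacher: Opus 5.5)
Your architecture matches the paper's. In the discrete case you fix $b_1$, minimize $\sum_j b_j$ over $W$, and lower one $b_j$ to reach a contradiction (Theorem \ref{thm14}). In the general case you refine partitions, apply Arzel\`a--Ascoli, and pass to the limit via semicontinuity of the trace sets. The genuine gap is the step you flag yourself, the uniform control of the $\rho_l$, and the tools you propose for it do not work.

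First, along the refining partitions the points $P_j^l$ with $\mu(F_j^l)>0$ in general accumulate at $P_0=P_1^l$. So no separation between $P_1$ and $P_j$ can hold uniformly in $l$.

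Second, your Step 3 claim that the $\rho_l$ are uniformly Lipschitz ``since each is a min of smooth hyperboloids with controlled gradients'' is false in the critical case without further normalization. As $b\to|P|$ the sheet $\mathcal{E}(P,b)$ collapses onto the ray $\{tP;~t\ge 1\}$. Its radial function $h(\cdot,P,b)$ then lives on the shrinking cap $\{x\cdot P>b\}$, and its slope at points where $h\ge(1+\eta)|P|$ blows up like $(|P|-b)^{-1/2}$.

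The same geometry shows that in Step 2 a sheet cannot be made ``never active''. Indeed $h(x,P,b)\ge\frac{|P|+b}{2}$, with equality at $x=P/|P|$, so the vertex of $\mathcal{E}(P_j,b_j)$ always lies on the segment $OP_j$. $W\neq\emptyset$ still holds, for one of two reasons. Either $G_{\mathcal{R}(\mathbf{b})}(P_j)\le\int_{\{x\cdot P_j>b_j\}}f\,dx\to 0$ as $b_j\to|P_j|$, or, as in Lemma \ref{lem15}(a), $b_1$ is taken near $-|P_1|$. In neither case does the separation hypothesis do the work.

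The missing idea is the normalization the paper builds in. For every $l$ one keeps the same point $P_1^l=P_0$ and the same $b_1$, chosen close to $-|P_0|$ as in Theorem \ref{thm14}. The uniform bounds then follow in four steps.

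\begin{enumerate}
\item \emph{Upper bound.} $\rho_l\le C_2:=\max_{\bar\Omega}h(\cdot,P_0,b_1)$ uniformly, and the choice of $b_1$ lets you arrange $2C_2<\min_{\bar D}|P|$.
\item \emph{Lower bound.} Since $\mu(F_1^l)>0$, the first sheet is active at some $x_0$, so $h(x_0,P_j^l,b_j^l)\ge\rho_l(x_0)\ge\frac{|P_0|+b_1}{2}$ for all $j$. Compactness gives a uniform margin $x\cdot P/|P|\ge -1+\epsilon$ on $\bar\Omega\times\bar D$. Together these force $b_j^l+|P_j^l|\ge c>0$, hence $\rho_l\ge c/2$.
\item \emph{No degeneration.} Any sheet active somewhere satisfies $\frac{|P_j^l|+b_j^l}{2}\le\rho_l\le C_2$, i.e.\ $b_j^l\le 2C_2-|P_j^l|<0$. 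This is the upper bound in (\ref{11}), and it is exactly what rules out $b_j\to|P_j|$.
\item \emph{Equicontinuity.} On the active set, $h\le C_2$ gives $x\cdot P_j^l-b_j^l\ge\frac{|P_j^l|^2-(b_j^l)^2}{2C_2}\ge c'>0$. So the active sheets have uniformly bounded gradients on uniform neighbourhoods of their contact points.
\end{enumerate}

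Without fixing $(P_0,b_1)$ the $\rho_l$ need not be uniformly bounded. Without the smallness of $b_1$, nothing prevents the active $b_j^l$ from approaching $|P_j^l|$, and then the Lipschitz bound is lost.
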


In order to prove Theorem \ref{thm1}, we first give Snell law in vector form and discuss the physical constraints of $\bar{\Omega}$ and $\bar{D}$ to ensure that total internal reflection cannot occur. Then we introduce the refractor for the case $\kappa = -1$ and give some properties of it. Next, the existence results when the underlying measure is the finite sum of $\delta$-measures is proved by using approximation by ovals, see Theorem \ref{thm14}. At last, using these results, we can prove the main result of this paper.

The rest of the content is organized as follows: In Section \ref{sec2}, we give the Snell law in vector form in negative refractive index material, which is the preliminary of this paper. In Section \ref{sec3}, we give the definition of the refractor for the case $\kappa = -1$ and discuss some properties. The definition of the weak solution to the near field refraction problem for the critical case is given in Section \ref{sec4}, and the existence of the weak solution in both in discrete and general situations are proved in Section \ref{sec5}.

\section{Snell law in vector form}
\label{sec2}

\sloppy{}

In this section, we briefly introduce Snell law in vector form in negative refractive index material.

Suppose $\mathcal{R}$ is a surface in $\mathbb{R}^{n}$ that separates two homogeneous and isotropic media \textsc{I} and \textsc{II}, with refractive indices $n_{1}>0$ and $n_{2}<0$. A ray of light emitted from $O\in S^{n-1}$ in medium \textsc{I} with direction $x\in S^{n-1}$ strikes at $\Gamma$ at the point $Q$, then the refracted ray has the direction $m\in S^{n-1}$ in medium \textsc{II}. Let $\nu$ be the unit normal to $\Gamma$ at $Q$ going towards medium \textsc{II}, $\theta_{1}$ denote the angle between $x$ and $\nu$, referred to as the angle of incidence; similarly, $\theta_{2}$ denote the angle between 
$m$ and $\nu$, defined as the angle of refraction. Then we have the well-known Snell law in scalar form:
\begin{equation}\label{1}
  n_{1}\sin\theta_{1} = n_{2}\sin\theta_{2}.
\end{equation}
This law can be written in vector form as:
\begin{equation}\label{2}
  n_{1}(x\times\nu) = n_{2}(m\times\nu).
\end{equation}

From (\ref{2}), it is easily seen that $x,m$ and $\nu$ are in the same plane. Since we have set $\kappa=\dfrac{n_{2}}{n_{1}}$, (\ref{2}) can be written as
\begin{equation}\label{3}
  x-\kappa m = \lambda \nu,
\end{equation}
where $\lambda\in \mathbb{R}$ is given by
\begin{equation}\label{4}
\begin{aligned}
  \lambda  & = x\cdot \nu + \sqrt{(x\cdot \nu)^{2} - (1 - \kappa^{2})} \\
  & = x\cdot \nu + \vert \kappa \vert \sqrt{1 - \kappa^{-2}(1 - (x\cdot \nu)^{2})}.
  \end{aligned}
\end{equation}
Defining
\begin{equation}\label{5}
  \Phi(t) = t + \vert \kappa \vert \sqrt{1 - \kappa^{-2}(1 - t^{2})},
\end{equation}
then 
$$\lambda = \Phi (x \cdot \nu)$$.

Furthermore, we need to determine the physical constraints of $\bar{\Omega}$ and $\bar{\Omega}^{\ast}$ for the case $\kappa = -1$ to ensure that total internal reflection cannot occur.

When $\kappa = -1$, we know that there holds $\theta_{1} = \theta_{2}$. Set $\theta_{1} = \theta_{2} = \theta$, then in order to ensure that total internal reflection cannot occur, we only need $0 < \theta < \dfrac{\pi}{2}$. For we have $x \cdot m = \cos(\theta_{1} + \theta_{2}) = \cos(2\theta)$, so we have
\begin{equation}\label{6}
  x \cdot m > -1.
\end{equation}

\begin{remark}
Suppose that the refractive indices of media \textsc{I} and \textsc{II} are given by $n_{1}>0$ and $n_{2}<0$, and set $\kappa = \dfrac{n_{2}}{n_{1}} = -1$. Then a light ray in medium \textsc{I} in the direction $x \in S^{n-1}$ is refracted by some surface into a light ray in medium \textsc{II} in the direction $m \in S^{n-1}$ if and only if $x\cdot m > -1$.
\label{rem2}
\end{remark}

\section{Refractor and its properties}
\label{sec3}

\sloppy{}

In this section, we give the definition of the refractor for the case $\kappa = -1$ and discuss some properties of it.

From \cite{GS15}, we know that in the case $\kappa = -1$, the refracting surface is a semi-hyperboloid given by
$$\mathcal{E}(P,b) := \{h(x,P,b)x;~x \in S^{n-1},~ x \cdot P > b\},$$
where $h(x,P,b) = \dfrac{b^{2} - \vert P \vert^{2}}{2(b - x \cdot P)}$ with $\vert b \vert \leq \vert P \vert$.

Now we can define the notion of the refractor for the case $\kappa = -1$.

\begin{definition}
  Let $\mathcal{R} = \{\rho(x)x; x \in \bar{\Omega}\}$ be a surface. Then $\mathcal{R}$ is called a near field refractor for the case $\kappa = -1$ if for any $\rho(x_{0})x_{0} \in \mathcal{R}$, there exist $P \in \bar{D}$ and a number $b$ satisfies $\kappa \vert P \vert < b < \vert P \vert$, such that the refracting oval $\mathcal{E}(P,b)$ supports $\mathcal{R}$ at $\rho(x_{0})x_{0}$. Namely, for any $ x \in \bar{\Omega}$, we have $\rho(x) \leq h(x,P,b)$ and $\rho(x_{0}) = h(x_{0},P,b)$. 
  \label{def3}
\end{definition}

The following Figure \ref{fig3} shows the refracting semi-hyperboloid which refracts all ray emitted from the source $O$ to a specific point $P$ for the case $\kappa = -1$.

\begin{figure}[h]
  \centering
  \includegraphics[width=7.3cm]{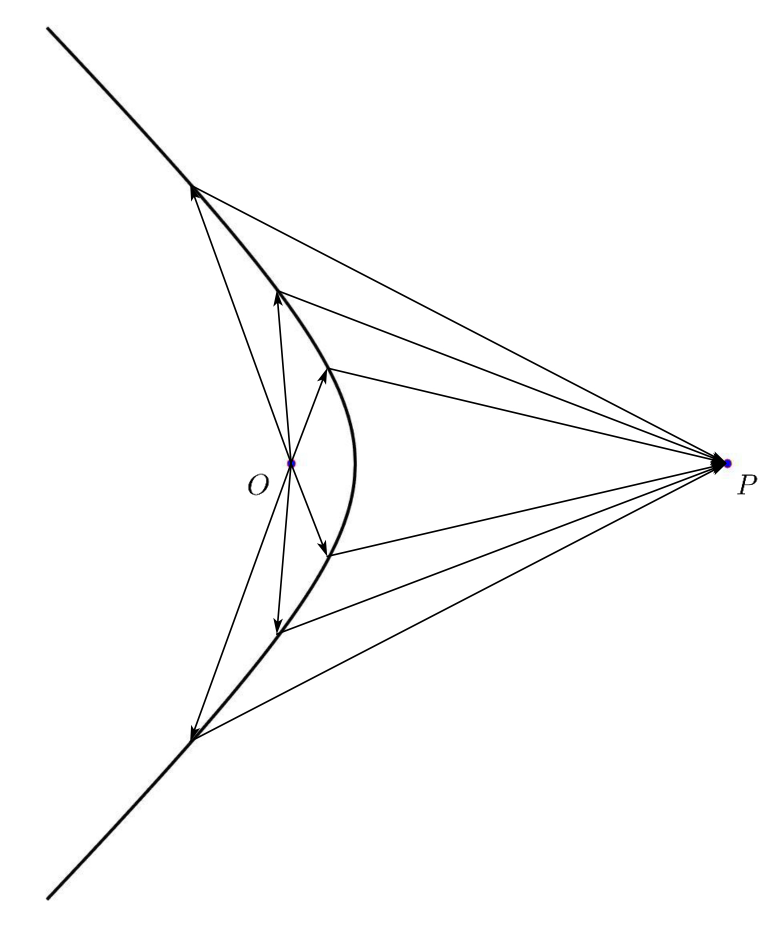}
\caption{Refracting semi-hyperboloid when $\kappa = -1$, where $O$ and $P$ are the focus of hyperboloid.}\label{fig3}
\end{figure}

Now we turn to discuss some properties of the refractor.

\begin{lemma}
  Suppose that $\mathcal{R} = \{\rho(x)x;~x\in \bar{\Omega}\}$ is a near field refractor for the case $\kappa = -1$, then $\rho$ is Lipschitz continuous with the Lipschitz constant only depending on $b$ and $\vert P \vert$.
  \label{lem4}
\end{lemma}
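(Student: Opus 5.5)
The plan is the standard one for refractors defined through supporting surfaces. First show that each supporting function $h(\cdot,P,b)$ is uniformly Lipschitz on $\bar\Omega$. Then transfer this to $\rho$, which is locally the infimum of its supports.

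\textbf{Step 1: bounds on $h(\cdot,P,b)$.} Fix $P\in\bar D$ and $b$ with $-|P|<b<|P|$. Then the numerator $b^2-|P|^2$ is negative and fixed. On the domain $x\cdot P>b$ of $\mathcal{E}(P,b)$ the denominator $2(b-x\cdot P)$ is negative, so $h>0$. Since $x\cdot P\le |P|$, we have $|b-x\cdot P|\le |P|-b$, and therefore
\[
h(x,P,b)\ \ge\ \frac{|P|^2-b^2}{2(|P|-b)}=\frac{|P|+b}{2}>0 .
\]
The derivative of $h$ along $S^{n-1}$ is
\[
\nabla_x h=\frac{(b^2-|P|^2)\,P^{T}}{2(b-x\cdot P)^2},
\qquad\text{so}\qquad
|\nabla_x h|\le \frac{2|P|}{|P|^2-b^2}\,h(x,P,b)^2 .
\]
The gradient is therefore controlled by $h^2$. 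What remains is an upper bound for $h$ at the points where the oval touches $\mathcal R$, i.e.\ a lower bound for $x\cdot P-b$.

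\textbf{Step 2: upper bound for $\rho$ (the main obstacle).} Here I would use the support inequality $\rho\le h(\cdot,P,b)$ on all of $\bar\Omega$, together with Remark \ref{rem2}. The physical constraint $x\cdot P/|P|>-1$ on the compact set $\bar\Omega\times\bar D$ gives a uniform gap, and I would try to use it to keep $x\cdot P-b$ away from zero. Concretely, I would argue that $\bar\Omega$ must lie inside the cone $\{x\cdot P>b\}$ on which the oval is defined. Otherwise the support condition would force $\rho\le h$ with $h$ blowing up near the boundary of the cone. If, as I expect, the admissible $b$ is bounded away from $|P|$, this also gives $x\cdot P-b\ge \delta(b,|P|)>0$ on $\bar\Omega$.

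This step is delicate. In the statement the dependence is written only through $b$ and $|P|$, so I would accept a constant of the form $C(b,|P|)$ that stays uniform as long as $b$ stays in a compact subset of $(-|P|,|P|)$. If this route fails, I would fall back on $\sup\rho\le h(x_0,P,b)$ at a fixed point $x_0$ and note that this bound depends only on $b$ and $|P|$. Either way, Steps 1 and 2 give $|\nabla h|\le L(b,|P|)$ on $\bar\Omega$ for each supporting oval.

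\textbf{Step 3: Lipschitz bound for $\rho$.} Take $x_0,x_1\in\bar\Omega$, and let $\mathcal E(P_0,b_0)$ support $\mathcal R$ at $\rho(x_0)x_0$. By the support inequality and equality at the contact point,
\[
\rho(x_1)-\rho(x_0)\le h(x_1,P_0,b_0)-h(x_0,P_0,b_0)\le L\,|x_1-x_0| ,
\]
where the mean value theorem is applied along a geodesic inside the cone $\{x\cdot P_0>b_0\}$ and $L$ is the constant from Step 2. Exchanging the roles of $x_0$ and $x_1$, using the oval that supports $\mathcal R$ at $x_1$, gives the reverse inequality. Hence $|\rho(x_1)-\rho(x_0)|\le L|x_1-x_0|$. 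The only nonroutine ingredient is the uniform control of $x\cdot P-b$ in Step 2; everything else is explicit differentiation of $h$.
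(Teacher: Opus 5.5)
Your Step 2 is a genuine gap, and it is the whole content of the lemma. You correctly reduce everything to a positive lower bound for $x\cdot P-b$ (equivalently, an upper bound for $h(\cdot,P,b)$) along the path from $x_0$ to $x_1$, but none of your sketched arguments gives it:
\begin{itemize}
\item The blow-up of $h$ near $\{x\cdot P=b\}$ makes $\rho\le h$ easier to satisfy, so it forces nothing.
\item Keeping $b$ away from $|P|$ only gives $x\cdot P-b\le |P|-b$. That is an upper bound, not a lower one.
\item The fallback $\sup\rho\le h(x_0,P,b)$ controls $\rho$. Your mean value step needs $h(\cdot,P_0,b_0)$ bounded along the whole path, and $h\ge\rho$ points the wrong way.
\end{itemize}
In fact no constant depending only on $b$ and $|P|$ exists, even with $b$ fixed. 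Take $P=e_n$ and $b=0$, so $h(x,e_n,0)=1/(2x_n)$, and set $\rho=h$ on $\bar\Omega=\{x\in S^{n-1}:x_n\ge\epsilon\}$. This is a refractor, since every point is supported by $\mathcal{E}(e_n,0)$, yet its Lipschitz constant is at least $\sqrt{1-\epsilon^2}/(2\epsilon^2)$. Your Step 3 has a second problem: it silently uses one $L$ for two different ovals. The reverse inequality comes from the oval at $x_1$, with its own $(P_1,b_1)$, so you need $L$ bounded uniformly over all supporting pairs. You do not address this.

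The paper's proof has exactly your Step 3 structure. It disposes of the crux by replacing $(y\cdot P-b)(x\cdot P-b)$ with $(|P|-b)^2$ in the denominator. Since $0<x\cdot P-b\le|P|-b$, that inequality runs the wrong way. The example above shows its constant $\frac12\frac{(|P|+b)|P|}{|P|-b}$, which equals $\frac12$ there, is false. So you located the real difficulty, but neither you nor the paper resolves it.

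A working fix starts from your own Step 1 bound $|\nabla h|\le\frac{2|P|}{|P|^2-b^2}h^2$. This is the infinitesimal form of the fact that $1/h(x,P,b)=\frac{2(x\cdot P-b)}{|P|^2-b^2}$ is affine in $x$, so use that directly. Since $0<\rho\le h(\cdot,P_0,b_0)$ with equality at $x_0$,
\begin{align*}
\rho(x_1)-\rho(x_0)&=\rho(x_0)\rho(x_1)\Big(\frac{1}{\rho(x_0)}-\frac{1}{\rho(x_1)}\Big)\le\rho(x_0)\rho(x_1)\,\frac{2(x_0-x_1)\cdot P_0}{|P_0|^2-b_0^2}\\
&\le\frac{2|P_0|\,\rho(x_0)\rho(x_1)}{|P_0|^2-b_0^2}\,|x_1-x_0|.
\end{align*}
This involves only the values of $\rho$ at the two points and no path at all. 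Swapping $x_0$ and $x_1$, $\rho$ is Lipschitz with constant $2C_2^2\sup\frac{|P|}{|P|^2-b^2}$, the supremum taken over all supporting pairs. This requires $\rho\le C_2$ and $|P|^2-b^2\ge c_0>0$ for every supporting pair. The latter must come from further information, such as $C_1\le\rho$, the uniform gap in $x\cdot P/|P|>-1$, and the geometry of $\bar\Omega$ and $\bar D$, not from $b$ and $|P|$ alone.

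Alternatively, read $h$ literally. Where $x\cdot P<b$ you have $h<0$, so the support condition with $\rho>0$ forces $\bar\Omega\subset\{x\cdot P>b\}$. This, not the blow-up, is the correct reason. Compactness then gives $\alpha=\min_{\bar\Omega}(x\cdot P-b)>0$. The exact difference formula for $h$ then yields the constant $\frac{(|P|^2-b^2)|P|}{2\alpha^2}$, as the paper itself does in the proof of Theorem \ref{thm1}. This too must be made uniform over supports. In every version the constant depends on more than $b$ and $|P|$, so the statement needs extra hypotheses such as $C_1\le\rho\le C_2$.
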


\begin{proof}
  Suppose that $\mathcal{E}(P,b)$ supports $\mathcal{R}$ at $\rho(x)x$, then
  $$\rho(x) = h(x,P,b) \quad \text{and} \quad \rho(y) \leq h(y,P,b)~\text{for all} ~y \in \bar{\Omega}.$$
  So we have
  \begin{equation*}
    \begin{aligned}
    \vert \rho(y) - \rho(x) \vert & \leq \vert h(y,P,b) - h(x,P,b) \vert \\
    & \leq \frac{1}{2} \left\vert \frac{b^{2} - \vert P \vert^{2}}{b - y \cdot P} - \frac{b^{2} - \vert P \vert^{2}}{b - x \cdot P} \right\vert \\
    & \leq \frac{1}{2} \left\vert \frac{(b + \vert P \vert)(b - \vert P \vert)\vert P \vert}{(b - \vert P \vert)^{2}} \right\vert \Vert x - y \Vert \\
    & \leq \frac{1}{2} \left\vert \frac{(b + \vert P \vert)\vert P \vert}{(b - \vert P \vert)}\right\vert \Vert x - y \Vert.
    \end{aligned}
  \end{equation*}
  Exchanging the roles of $x$ and $y$, then we can get $\vert \rho(x) - \rho(y) \vert \leq L \Vert x - y \Vert$ for $L = \dfrac{1}{2} \left\vert \dfrac{(b + \vert P \vert)\vert P \vert}{(b - \vert P \vert)}\right\vert > 0$. Hence $\rho$ is Lipschitz continuous on $\bar{\Omega}$.
  
\end{proof}

Next, we define refractor mapping and trace mapping, and discuss some properties of them.

\begin{definition}

Suppose the near field refractor $\mathcal{R} = \{\rho(x)x; x \in \bar{\Omega}\}$ is given, the refractor mapping of $\mathcal{R}$ is a multi-value map define by
  \begin{equation}\label{7}
    \mathcal{N}_{\mathcal{R}}(x) := \{P \in \bar{D};~ \mathcal{E}(P,b)~ supports~ \mathcal{R} ~at ~ \rho(x)x\}.
  \end{equation}
  Given $P \in \bar{D}$, the trace mapping of $\mathcal{R}$ is defined by
  \begin{equation}\label{8}
    \mathcal{T}_{\mathcal{R}}(P) := \{x \in \bar{\Omega};~ P \in \mathcal{N}_{\mathcal{R}}(x)\}.
  \end{equation}

\label{def5}
\end{definition}

\begin{lemma}
Suppose that $P \in \bar{D}$, then $\mathcal{T}_{\mathcal{R}}(P)$ is a closed set in $\bar{\Omega}$.
\label{lem6}
\end{lemma}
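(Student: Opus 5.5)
The proof will be a sequential compactness argument. Take a sequence $x_k \in \mathcal{T}_{\mathcal{R}}(P)$ with $x_k \to x_0$; we must show $x_0 \in \mathcal{T}_{\mathcal{R}}(P)$. Since $\bar{\Omega}$ is closed, $x_0 \in \bar{\Omega}$ is automatic.

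By Definition \ref{def5}, $P \in \mathcal{N}_{\mathcal{R}}(x_k)$ for each $k$. So there are numbers $b_k$ with $-|P| < b_k < |P|$ such that
\[
\rho(x) \le h(x,P,b_k) \quad \text{for all } x \in \bar{\Omega}, \qquad \rho(x_k) = h(x_k,P,b_k).
\]
Because $b_k$ lies in a bounded interval, we can pass to a subsequence with $b_k \to b_0 \in [-|P|,|P|]$. Once we know $b_0$ lies in the open interval $(-|P|,|P|)$ and $b_0 - x\cdot P$ stays away from zero on $\bar\Omega$, the limits are straightforward. The function $h(x,P,b) = \frac{b^2-|P|^2}{2(b-x\cdot P)}$ is continuous in $(x,b)$ there, so $h(x,P,b_k)\to h(x,P,b_0)$ pointwise. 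Also $\rho$ is continuous by Lemma \ref{lem4}, which gives $\rho(x_k)\to\rho(x_0)$. Hence
\[
\rho(x)\le h(x,P,b_0) \ \text{ for all } x \in \bar{\Omega}, \qquad \rho(x_0)=h(x_0,P,b_0).
\]
This says $\mathcal{E}(P,b_0)$ supports $\mathcal{R}$ at $\rho(x_0)x_0$, that is, $x_0\in\mathcal{T}_{\mathcal{R}}(P)$.

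The main obstacle is excluding the degenerate limits $b_0=\pm|P|$, and also $b_0 = x\cdot P$ for some $x\in\bar\Omega$, where $h$ is not continuous. The plan uses the positivity and boundedness of $\rho$. $\mathcal{R}$ is a surface of the form $\rho(x)x$ with $\rho>0$ continuous on the compact set $\bar\Omega$, so $0<\min\rho\le\rho\le\max\rho$.

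If $b_k\to\pm|P|$, then $b_k^2-|P|^2\to0$. For $b_0=|P|$, the inequality $\rho(x_k)=h(x_k,P,b_k)$ forces $b_k-x_k\cdot P\to0$ as well, which means $x_0\cdot P=|P|$, i.e. $x_0=P/|P|$. I expect this to contradict $\rho\le h(\cdot,P,b_k)$ at nearby points: $h(x,P,b_k)\to 0$ for every fixed $x$ with $x\cdot P<|P|$, while $\rho$ stays bounded below. For $b_0=-|P|$, we have $h(x_k,P,b_k)\to 0$ directly, contradicting $\rho(x_k)\ge\min\rho>0$. The case $b_0=x\cdot P$ for some $x$ is handled the same way: $h(x,P,b_k)\to\infty$ is harmless, but the support condition must hold on all of $\bar\Omega$, including points just beyond $x$ where $b_0 - x\cdot P$ changes sign and $h$ becomes negative, contradicting $\rho>0$.

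If this case analysis proves delicate, I would fall back on the admissibility condition $x\cdot P/|P|>-1$ from Theorem \ref{thm1} together with compactness. The aim is to show that the admissible supporting $b$'s at points of $\bar\Omega$ form a compact subset of $(-|P|,|P|)$, on which $h$ is uniformly continuous.
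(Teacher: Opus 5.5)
There is a genuine gap in how you exclude the degenerate limit $b_0=x\cdot P$. Since $\rho>0$ and $|b_k|<|P|$, the support inequality $\rho\le h(\cdot,P,b_k)$ forces $b_k<x\cdot P$ for \emph{every} $x\in\bar\Omega$. Hence $b_0\le t_{\min}:=\min_{\bar\Omega}x\cdot P$, and the case you must rule out is exactly $b_0=t_{\min}$, attained at some $\bar x\in\bar\Omega$.

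At such a point there are no points of $\bar\Omega$ ``just beyond'' where $b_0-x\cdot P$ changes sign, because $b_0-x\cdot P\le 0$ on all of $\bar\Omega$. So your sign-change contradiction never materializes. Nothing else in your sketch excludes this case either:
\begin{itemize}
\item $h(\bar x,P,b_k)\to+\infty$ is compatible with $\rho(\bar x)\le h(\bar x,P,b_k)$.
\item If $x_0\cdot P>t_{\min}$ and $|t_{\min}|<|P|$, then $h(x_k,P,b_k)$ converges to a finite positive number, so the equality at $x_k$ gives no contradiction.
\item Your fallback condition $x\cdot P/|P|>-1$ only concerns the endpoint $b_0=-|P|$. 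Incidentally, your ``$h(x_k,P,b_k)\to 0$ directly'' there needs $x_0\neq -P/|P|$.
\end{itemize}
If $b_0=t_{\min}$, the limit $\mathcal{E}(P,b_0)$ is not an admissible supporting semi-hyperboloid, since the direction $\bar x$ lies outside $\{x\cdot P>b_0\}$. So the argument does not close.

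The missing idea is monotonicity of $h$ in $b$, which makes compactness unnecessary. For $t=x\cdot P$ with $|t|\le|P|$ and $b<t$,
\[
\partial_b h(x,P,b)=\frac{(b-t)^2+|P|^2-t^2}{2(b-t)^2}>0 .
\]
Suppose $\mathcal{E}(P,b)$ supports $\mathcal{R}$ at $\rho(x_1)x_1$ and $\mathcal{E}(P,b')$ supports it at $\rho(x_2)x_2$. Then $h(x_1,P,b)=\rho(x_1)\le h(x_1,P,b')$ gives $b\le b'$, and $h(x_2,P,b')=\rho(x_2)\le h(x_2,P,b)$ gives $b'\le b$. So the supporting parameter with focus $P$ is unique, and your $b_k$ are all equal to one admissible $b$.

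This is what the paper's proof uses, without stating it: it takes a single $b$ for the whole sequence. Then $\rho(x_n)=h(x_n,P,b)$ passes to the limit by Lemma~\ref{lem4} and continuity of $h(\cdot,P,b)$ on $\bar\Omega$, while $\rho\le h(\cdot,P,b)$ holds unchanged. With this observation your proof reduces to the paper's, and the endpoint case analysis disappears.
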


\begin{proof}
  Let $x_{n} \in \mathcal{T}_{\mathcal{R}}(P)$ with $x_{n} \rightarrow x_{0}$, we need to prove that $x_{0} \in \mathcal{T}_{\mathcal{R}}(P)$.
  
  Because $x_{n} \in \mathcal{T}_{\mathcal{R}}(P)$, then there exists $b > 0$, such that $\mathcal{E}(P,b)$ supports $\mathcal{R}$ at $\rho(x_{n})x_{n}$, then we have $\rho(x_{n}) = h(x_{n},P,b)$. For $x_{0} \in \bar{\Omega}$, Lemma \ref{lem4} implies that $\rho(x_{n})x_{n} \rightarrow \rho(x_{0})x_{0}$, from which we deduce $\rho(x_{0}) = h(x_{0},P,b)$. From Definition \ref{def3}, we have $\rho(x) \leq h(x,P,b)$, hence $\rho(x) \leq h(x_{0},P,b)$. So $\mathcal{E}(P,b)$ supports $\mathcal{R}$ at $\rho(x_{0})x_{0}$, hence $x_{0} \in \mathcal{T}_{\mathcal{R}}(P)$.
\end{proof}

\begin{lemma}
 For any $F \subset \bar{D}$, we have
  \newline (a) $[\mathcal{T}_{\mathcal{R}}(F)]^{c} \subset \mathcal{T}_{\mathcal{R}}(F^{c})$;
  \newline (b) The set $\mathcal{M} = \{F \subset \bar{D};~\mathcal{T}_{\mathcal{R}}(F)~is~Lebesgue~measurable\}$ is a $\sigma$-algebra containing all Borel sets in $\bar{D}$.
\label{lem7}
\end{lemma}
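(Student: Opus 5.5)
Part (a) follows from the definitions once we note that every point of $\bar{\Omega}$ is in the trace of some target. Let $x \in [\mathcal{T}_{\mathcal{R}}(F)]^{c}$, i.e. $x \in \bar{\Omega}$ and $x \notin \mathcal{T}_{\mathcal{R}}(F)$. By Definition \ref{def3}, $\mathcal{R}$ admits a supporting oval $\mathcal{E}(P,b)$ at $\rho(x)x$ for some $P \in \bar{D}$. Hence $\mathcal{N}_{\mathcal{R}}(x) \neq \emptyset$, and we pick such a $P$. If $P \in F$, then $x \in \mathcal{T}_{\mathcal{R}}(P) \subset \mathcal{T}_{\mathcal{R}}(F)$, which is a contradiction. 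So $P \in F^{c}$, and therefore $x \in \mathcal{T}_{\mathcal{R}}(F^{c})$. Throughout we use $\mathcal{T}_{\mathcal{R}}(F) = \bigcup_{P \in F}\mathcal{T}_{\mathcal{R}}(P)$ and take complements relative to $\bar{\Omega}$ and $\bar{D}$.

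For (b) we check the $\sigma$-algebra axioms directly.
\begin{itemize}
\item Since $\mathcal{T}_{\mathcal{R}}(\bar{D}) = \bar{\Omega}$ by the argument above, we have $\bar{D} \in \mathcal{M}$.
\item Countable unions are immediate, because $\mathcal{T}_{\mathcal{R}}(\bigcup F_i) = \bigcup \mathcal{T}_{\mathcal{R}}(F_i)$.
\end{itemize}
Complements are the substantive step. From (a) we get
\[
\mathcal{T}_{\mathcal{R}}(F^{c}) = [\mathcal{T}_{\mathcal{R}}(F)]^{c} \cup \bigl(\mathcal{T}_{\mathcal{R}}(F) \cap \mathcal{T}_{\mathcal{R}}(F^{c})\bigr).
\]
The first piece is measurable whenever $F \in \mathcal{M}$. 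The second piece lies in the set
\[
S = \{x \in \bar{\Omega};~ \mathcal{N}_{\mathcal{R}}(x) \text{ contains at least two points}\},
\]
so it suffices to show that $|S| = 0$; completeness of Lebesgue measure then gives measurability.

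To prove $|S| = 0$, we use Lemma \ref{lem4}. Locally $\rho$ is Lipschitz, so by Rademacher's theorem it is differentiable a.e. on $\bar{\Omega}$. We will show that at a differentiability point $x_0$ the target $P$ is uniquely determined. If $\mathcal{E}(P,b)$ supports $\mathcal{R}$ at $\rho(x_0)x_0$, then $h(\cdot,P,b) - \rho$ attains a minimum (equal to $0$) at $x_0$. Hence $\nabla \rho(x_0) = \nabla_x h(x_0,P,b)$, so the normal to $\mathcal{R}$ at $\rho(x_0)x_0$ coincides with the normal $\nu$ of the hyperboloid there. By the Snell law \eqref{3} with $\kappa=-1$, namely $x_0 + m = \lambda\nu$ with $\lambda = \Phi(x_0\cdot\nu)$ (here $\lambda = 2x_0\cdot\nu$), the refracted direction $m$ is determined by $x_0$ and $\nu$ alone. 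The hyperboloid $\mathcal{E}(P,b)$ refracts the ray into the direction of $P - \rho(x_0)x_0$. Therefore $P$ lies on the ray $\{\rho(x_0)x_0 + t m;~ t>0\}$.

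Uniqueness along this ray is where I expect the main obstacle. Knowing the ray only determines $P$ up to position on it, so it must meet $\bar{D}$ in at most one point. I would first try to obtain this from the geometric hypothesis on $D$, namely that it lies on a hypersurface that each ray from the refractor crosses once. If that fails, I would instead exploit the fact that $b$ is then fixed by $\rho(x_0) = h(x_0,P,b)$, and compare two hyperboloids with foci on the same ray. Both are tangent at the point and both lie above $\rho$. I would try to show that one of them must cross below the other near $x_0$, and hence below $\rho$, which contradicts the support property. Granting uniqueness, $S$ is contained in the set of non-differentiability points, which is null, and this completes the complement step.

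Finally, Borel sets lie in $\mathcal{M}$. For compact $K \subset \bar{D}$, the set $\mathcal{T}_{\mathcal{R}}(K)$ is closed. To see this, take $x_n \in \mathcal{T}_{\mathcal{R}}(P_n)$ with $x_n \to x_0$, pass to a subsequence with $P_n \to P \in K$ and $b_n \to b$, and pass to the limit in $\rho(x_n) = h(x_n,P_n,b_n)$ and $\rho \le h(\cdot,P_n,b_n)$, using continuity of $h$ as in Lemma \ref{lem6}. For this we need $b_n$ bounded away from $|P_n|$, which should follow from the boundedness of $\rho$. Closed subsets of $\bar{D}$ are compact, so $\mathcal{M}$ contains all closed sets, and since it is a $\sigma$-algebra it contains all Borel sets.
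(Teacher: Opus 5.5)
\textbf{Verdict: genuine gap.} Part (a), the union step and the compactness of $\mathcal{T}_{\mathcal{R}}(K)$ follow the paper's proof. For the complement step, the paper simply asserts $|\mathcal{T}_{\mathcal{R}}(F^{c})\cap\mathcal{T}_{\mathcal{R}}(F)|=0$ without justification. Your reduction of this to $|S|=0$, and via Rademacher to uniqueness of the target on the refracted ray, is the natural way to justify it. However, you never prove that uniqueness, and it cannot be proved from the stated hypotheses.

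Your second strategy fails for a structural reason. Even a crossing would only put one hyperboloid below the other, not below $\rho$. More to the point, tangent hyperboloids with foci on the same ray never cross. Let $Z=\rho(x_0)x_0$, $P_1=Z+t_1m$, $P_2=Z+t_2m$ with $0<t_1<t_2$. Then $b_i=|Z|-t_i$, so $b_2=b_1-(t_2-t_1)$. For $z\in\mathcal{E}(P_2,b_2)$, the triangle inequality $|z-P_2|\le|z-P_1|+(t_2-t_1)$ gives $|z|-|z-P_1|\le b_1$. Since $r\mapsto r-|rx-P_1|$ is increasing, this yields $h(x,P_2,b_2)\le h(x,P_1,b_1)$ for every $x$ with $x\cdot P_1>b_1$. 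So whenever the hyperboloid with the farther focus supports $\mathcal{R}$ at $\rho(x_0)x_0$, the nearer one does too. The support property gives no contradiction.

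In fact the null-set claim is false without an extra hypothesis on $\bar{D}$. Take $\rho=h(\cdot,Q,b_Q)$ on a small cap $\bar{\Omega}$, with $Q\notin\bar{D}$. Let $\bar{D}$ consist of two pieces of concentric spheres about $Q$, with radii $r_2<r_1<\min_{\bar{\Omega}}(\rho-b_Q)$. Choose them so that each segment from $\rho(x)x$ to $Q$ meets both pieces. By the nesting above, with $Q$ as the farther focus, both corresponding hyperboloids support $\mathcal{R}$ at every point once $\bar{\Omega}$ is small enough. Hence $S=\bar{\Omega}$. Taking $F$ to be one of the pieces gives $\mathcal{T}_{\mathcal{R}}(F)=\mathcal{T}_{\mathcal{R}}(F^{c})=\bar{\Omega}$, so the refractor measure is not even additive.

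The gap therefore cannot be closed as the lemma is stated. Only your first strategy can work, and it requires adding a visibility hypothesis that neither the lemma nor the paper states, for instance that every refracted ray from $\mathcal{R}$ meets $\bar{D}$ at most once. With that hypothesis your argument goes through: at differentiability points of $\rho$ the refracted ray determines $P$, so $S$ lies in the null set of non-differentiability points.
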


\begin{proof}
  $(a)$ If $x \in [\mathcal{T}_{\mathcal{R}}(F)]^{c}$, then $\mathcal{N}_{\mathcal{R}}(x) \cap F = \emptyset$, then $\mathcal{N}_{\mathcal{R}}(x) \cap F^{c} \neq \emptyset$. Then by the definition of trace mapping (\ref{8}), we have $x \in \mathcal{T}_{\mathcal{R}}(F^{c})$.
  
  $(b)$ First, we need to prove that the set $\mathcal{M}$ is a $\sigma$-algebra.
  
  Obviously, we have $\mathcal{T}_{\mathcal{R}}(\emptyset) = \emptyset$, $\mathcal{T}_{\mathcal{R}}(\bar{D}) = \bar{\Omega}$.
  For $F_{i} \in \mathcal{M}$, we have
  \begin{equation*}
      \begin{aligned}
          \mathcal{T}_{\mathcal{R}}(\bigcup\limits_{i=1}^{\infty}F_{i}) & = \bigcup\limits_{P \in \bigcup\limits_{i=1}^{\infty}F_{i}}\{x \in \bar{\Omega};~P\in \mathcal{N}_{\mathcal{R}}(x)\} \\
          & = \bigcup\limits_{i=1}^{\infty}\bigcup\limits_{P \in F_{i}}\{x \in \bar{\Omega};~P\in \mathcal{N}_{\mathcal{R}}(x)\} =\bigcup\limits_{i=1}^{\infty} \mathcal{T}_{\mathcal{R}}(F_{i}),
      \end{aligned}
  \end{equation*}
  so $\mathcal{M}$ is closed under countable union. Also for $F \in \mathcal{M}$, using $(a)$, we can get
  \begin{equation*}
    \begin{aligned}
    \mathcal{T}_{\mathcal{R}}(F^{c}) & = \{x \in \bar{\Omega};~\mathcal{N}_{\mathcal{R}}(x) \cap F^{c} \neq \emptyset\} \\
    & = \{x \in \bar{\Omega};~\mathcal{N}_{\mathcal{R}}(x) \cap F = \emptyset\} \\ 
    & \cup \{x \in \bar{\Omega};~\mathcal{N}_{\mathcal{R}}(x) \cap F^{c} \neq \emptyset,~\mathcal{N}_{\mathcal{R}}(x) \cap F \neq \emptyset\} \\
    & = [\mathcal{T}_{\mathcal{R}}(F)]^{c} \cup [\mathcal{T}_{\mathcal{R}}(F^{c}) \cap \mathcal{T}_{\mathcal{R}}(F)].
    \end{aligned}
  \end{equation*}
  Since $\vert \mathcal{T}_{\mathcal{R}}(F^{c}) \cap \mathcal{T}_{\mathcal{R}}(F) \vert = 0$ and $\mathcal{T}_{\mathcal{R}}(F)$ is measurable, then $\mathcal{T}_{\mathcal{R}}(F^{c})$ is measurable, hence $\mathcal{M}$ is closed under taking complements. So we have the set $\mathcal{M}$ is a $\sigma$-algebra.
  
  Next, we need to prove that $\mathcal{M}$ contains all Borel sets in $\bar{D}$.
  
  Indeed, we need to prove that for any compact set $K \subset \bar{D}$, $\mathcal{T}_{\mathcal{R}}(K)$ is compact.
  
  Choose a compact set $K \subset \bar{D}$ and take $x_{j} \in \mathcal{T}_{\mathcal{R}}(K)$ and $P_{j} \in \mathcal{N}_{\mathcal{R}}(x_{j}) \cap K$. Suppose that $\mathcal{E}(P_{j},b_{j})$ supports $\mathcal{R}$ at $\rho(x_{j})x_{j}$, then we have
  \begin{equation}\label{9}
    \rho(x_{j}) = \frac{b_{j}^{2} - \vert P_{j} \vert^{2}}{2(b_{j} - x_{j} \cdot P_{j})}
  \end{equation}
  and
  \begin{equation}\label{10}
    \rho(x) \leq \frac{b_{j}^{2} - \vert P_{j} \vert^{2}}{2(b_{j} - x \cdot P_{j})}.
  \end{equation}
  Assume that there exist $0 < C_{1} \leq C_{2}$, such that $C_{1} \leq \rho(x) \leq C_{2}$. Then on the one hand, from (\ref{9}), we have
  \begin{equation*}
    b_{j}^{2} - \vert P_{j} \vert^{2} = 2\rho(x_{j})(b_{j} - x_{j} \cdot P_{j}) \geq 2\rho(x_{j})(b_{j} - P_{j}),
  \end{equation*}
  hence we have 
  $$b_{j} \leq 2\rho(x_{j}) - \vert P_{j} \vert \leq 2C_{2} - \vert P_{j} \vert.$$
  On the other hand, from (\ref{10}), we have
  $$2\rho(x)(x \cdot P_{j} - b_{j}) \leq \vert P_{j} \vert^{2} - b_{j}^{2} \leq \vert P_{j} \vert^{2},$$
  hence we have
  \begin{equation*}
    b_{j} \geq x \cdot P_{j} - \frac{\vert P_{j} \vert^{2}}{2\rho(x)} \geq x \cdot P_{j} - \frac{\vert P_{j} \vert^{2}}{2C_{1}}.
  \end{equation*} 
  So we have
  \begin{equation}\label{11}
    x \cdot P_{j} - \frac{\vert P_{j} \vert^{2}}{2C_{1}} \leq b_{j} \leq 2C_{2} - \vert P_{j} \vert.
  \end{equation}
  Then by taking a subsequence if necessary, we may assume that $x_{j} \rightarrow x_{0}$, $P_{j} \rightarrow P_{0} \in K$ and $b_{j} \rightarrow b_{0}$ as $j \rightarrow \infty$. Taking the limit as $j \rightarrow \infty$ in (\ref{10}), we have that $\mathcal{E}(P,b)$ supports $\mathcal{R}$ at $\rho(x_{0})x_{0}$. Then $x_{0} \in \mathcal{T}_{\mathcal{R}}(K)$, hence $\mathcal{T}_{\mathcal{R}}(K)$ is compact.
\end{proof}

\begin{lemma}
Suppose that $\mathcal{E}(P_{k},b_{k})$ is a sequence of semi-hyperboloid with $P_{k} \rightarrow P_{0}$ and $b_{k} \rightarrow b_{0}$ as $k \rightarrow \infty$. If $z_{k} \in \mathcal{E}(P_{k},b_{k})$ with $z_{k} \rightarrow z_{0}$ as $k \rightarrow \infty$, then $z_{0} \in \mathcal{E}(P_{0},b_{0})$, and the normal $\nu_{k}(z_{k})$ to the semi-hyperboloid $\mathcal{E}(P_{k},b_{k})$ at $z_{k}$ satisfies $\nu_{k}(z_{k}) \rightarrow \nu(z_{0})$, where $\nu(z_{0})$ is the normal to the semi-hyperboloid  $\mathcal{E}(P_{0},b_{0})$ at the point $z_{0}$.
\label{lem8}
\end{lemma}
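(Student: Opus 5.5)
Write $z_{k} = h(x_{k},P_{k},b_{k})x_{k}$ with $x_{k}\in S^{n-1}$, $x_{k}\cdot P_{k} > b_{k}$. The plan is to use the explicit polar parametrization of $\mathcal{E}(P,b)$ and continuity of every quantity in $(x,P,b)$.

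\textbf{Step 1: the directions converge.} Assume $z_{0}\neq 0$, which holds in the nondegenerate setting of Definition \ref{def3}, where $\kappa\vert P\vert<b<\vert P\vert$ keeps $h$ away from $0$. Then $x_{k} = z_{k}/\vert z_{k}\vert \to x_{0} := z_{0}/\vert z_{0}\vert$.

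\textbf{Step 2: $z_{0}$ lies on the limiting hyperboloid.} The relation $2\vert z_{k}\vert(b_{k}-x_{k}\cdot P_{k}) = b_{k}^{2}-\vert P_{k}\vert^{2}$ passes to the limit and gives $2\vert z_{0}\vert(b_{0}-x_{0}\cdot P_{0}) = b_{0}^{2}-\vert P_{0}\vert^{2}$. If $b_{0}^{2}<\vert P_{0}\vert^{2}$, then $b_{0}-x_{0}\cdot P_{0}\neq 0$. Since $b_{k}-x_{k}\cdot P_{k}<0$, the limit gives $b_{0}-x_{0}\cdot P_{0}\le 0$, so $x_{0}\cdot P_{0}>b_{0}$ strictly and $\vert z_{0}\vert = h(x_{0},P_{0},b_{0})$. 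Hence $z_{0}\in\mathcal{E}(P_{0},b_{0})$.

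\textbf{Step 3: the normals converge.} Use the defining equation in Cartesian form. Since the surface refracts rays from $O$ to $P$ with $\kappa=-1$, I expect $\mathcal{E}(P,b)$ to be the zero set of $F(z,P,b) = \vert z\vert - \vert z-P\vert - b$ (or a sign variant; this should be checked against $h$ by squaring). For $z\neq 0$, $z\neq P$,
$$\nabla_{z}F = \frac{z}{\vert z\vert} - \frac{z-P}{\vert z-P\vert}.$$
This is $x+m$ up to sign, consistent with Snell's law \eqref{3} with $\kappa=-1$, which gives $x + m = \lambda\nu$. The unit normal is therefore $\nu = \pm(x+m)/\vert x+m\vert$, with the orientation fixed toward medium \textsc{II}. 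This vector depends continuously on $(z,P,b)$ provided $x+m\neq 0$, which is exactly the physical constraint \eqref{6}, $x\cdot m>-1$. It also requires $z_{0}\neq P_{0}$, which follows because $z_{0}$ lies on the hyperboloid with $\vert b_{0}\vert<\vert P_{0}\vert$.

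Since $z_{k}\to z_{0}$ and $P_{k}\to P_{0}$, we get $x_{k}\to x_{0}$ and $m_{k} = (P_{k}-z_{k})/\vert P_{k}-z_{k}\vert \to m_{0}$. Hence $\nu_{k}(z_{k}) \to \nu(z_{0})$, with the same orientation choice preserved in the limit.

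\textbf{Main obstacle.} The hard part is the degenerate limit: ruling out $b_{0}=\pm\vert P_{0}\vert$, $z_{0}=0$, and $x_{0}\cdot m_{0}=-1$. I would handle this by assuming the parameters stay in the admissible range of Definition \ref{def3} with a strict margin, as in the bounds \eqref{11}. Together with the hypothesis $x\cdot P/\vert P\vert>-1$ on compact sets, this gives uniform non-degeneracy of $b_{k}^{2}-\vert P_{k}\vert^{2}$ and of $\vert x_{k}+m_{k}\vert$.
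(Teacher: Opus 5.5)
Your proposal is correct and follows essentially the paper's route. You write $\mathcal{E}(P,b)$ as $\{\vert z\vert-\vert z-P\vert=b\}$; this sign is indeed right, as squaring $\vert z-P\vert=\rho-b$ recovers $h$. You take the normal $z/\vert z\vert+(P-z)/\vert P-z\vert=x+m$ and pass to the limit. You are, however, more careful than the paper, whose one-line proof never checks $z_0\in\mathcal{E}(P_0,b_0)$ nor the nondegeneracy $\vert b_0\vert<\vert P_0\vert$ that the lemma genuinely needs; for instance, $\mathcal{E}(P_0,\vert P_0\vert)=\emptyset$.

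One simplification: you need neither \eqref{6} nor the hypothesis of Theorem \ref{thm1} to get $x_0+m_0\neq 0$. If $m_0=-x_0$, then $P_0=(\vert z_0\vert-\vert z_0-P_0\vert)x_0=b_0x_0$, so $x_0\cdot P_0=b_0$, which contradicts the strict inequality $x_0\cdot P_0>b_0$ obtained in your Step 2.
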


\begin{proof}
  Let $z = h(x,P_{k},b_{k})x$, $x \in S^{n-1}$, then the Cartesian coordinate of the equation of the semi-hyperboloid is $\vert z \vert - \vert z - P_{k} \vert = b_{k}$, so the normal vector at $z$ is $\nu_{k}(z) = \dfrac{z}{\vert z \vert} +  \dfrac{P_{k} - z}{\vert P_{k} - z \vert}$. So we have
  $$\nu_{k}(z_{k}) = \dfrac{z_{k}}{\vert z_{k} \vert} + \dfrac{P_{k} - z_{k}}{\vert P_{k} - z_{k} \vert} \rightarrow \dfrac{z_{0}}{\vert z_{0} \vert} + \dfrac{P_{0} - z_{0}}{\vert P_{0} - z_{0} \vert} = \nu(z_{0}).$$
\end{proof}

\begin{lemma}
 Suppose that $\mathcal{R}_{k} = \{\rho_{k}(x)x; ~x\in \bar{\Omega}\}$, $k \geq 1$, is a sequence of near field refractors, then fix $x_{0} \in \bar{\Omega}$, there exist $P_{k} \in \bar{D}$ and $b_{k}$, such that $\mathcal{E}(P_{k},b_{k})$ supports $\mathcal{R}_{k}$ at $\rho(x_{0})x_{0}$. Suppose that there exist $C_{1}, C_{2} > 0$, such that $C_{1} \leq \rho_{k}(x) \leq C_{2}$ in $\bar{\Omega}$, and $\rho_{k} \rightarrow \rho$ uniformly on $\bar{\Omega}$, then we have
  \newline (a) $\mathcal{R} := \{\rho(x)x; ~x\in \bar{\Omega}\}$ is also a near field refractor;
  \newline (b) For any compact set $K\subset \bar{D}$,
  $$\varlimsup_{k\rightarrow \infty} \mathcal{T}_{\mathcal{R}_{k}}(K) \subset \mathcal{T}_{\mathcal{R}}(K);$$
  \newline (c) For any open set $G \subset \bar{D}$,
  $$\mathcal{T}_{\mathcal{R}}(G) \subset \varliminf_{k\rightarrow \infty}\mathcal{T}_{\mathcal{R}_{k}}(G) \cup E,$$
  where $E$ is the singular set of $\mathcal{R}$.
\label{lem9}
\end{lemma}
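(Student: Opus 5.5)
The plan is a compactness argument on the supporting semi-hyperboloids, in the spirit of the proof of Lemma \ref{lem7}(b). Throughout, $\bar{D}$ is compact and $C_{1}\le\rho_{k}\le C_{2}$.

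\textbf{Bounding the supporting parameters.} Fix $x_{0}\in\bar{\Omega}$. Let $\mathcal{E}(P_{k},b_{k})$ support $\mathcal{R}_{k}$ at $\rho_{k}(x_{0})x_{0}$. I would redo the computation leading to (\ref{11}), with $\rho_{k}$ in place of $\rho$. This gives
$$x\cdot P_{k}-\frac{|P_{k}|^{2}}{2C_{1}}\le b_{k}\le 2C_{2}-|P_{k}|,$$
so $b_{k}$ is bounded uniformly in $k$. Passing to a subsequence, $P_{k}\to P_{0}\in\bar{D}$ and $b_{k}\to b_{0}$.

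\textbf{Part (a).} Let $k\to\infty$ in
$$\rho_{k}(x)\le h(x,P_{k},b_{k}),\qquad \rho_{k}(x_{0})=h(x_{0},P_{k},b_{k}).$$
Uniform convergence handles the left-hand sides. Continuity of $h$ in $(P,b)$, away from the singular set $b=x\cdot P$, handles the right-hand sides. The result is $\rho\le h(\cdot,P_{0},b_{0})$ with equality at $x_{0}$.

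This is the main obstacle: the limit must be nondegenerate. We need $b_{0}<|P_{0}|$ and $b_{0}-x\cdot P_{0}$ bounded away from $0$, with $\kappa|P_{0}|<b_{0}$ so that the constraint in Definition \ref{def3} is kept.
\begin{itemize}
\item If $b_{k}-x_{0}\cdot P_{k}\to 0$ while $b_{k}^{2}-|P_{k}|^{2}$ stays away from zero, then $h(x_{0},P_{k},b_{k})\to\infty$. This contradicts $\rho_{k}\le C_{2}$.
\item If instead $b_{0}=\pm|P_{0}|$, the numerator vanishes. Then $h(x_{0},P_{0},b_{0})$ is $0$, or degenerate when $x_{0}\cdot P_{0}=|P_{0}|$. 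I would rule this out using $\rho_{k}(x_{0})\ge C_{1}$ together with the lower bound on $b_{k}$.
\end{itemize}
I would first try to show that $|b_{k}|\le|P_{k}|-\delta$ for a uniform $\delta>0$, by combining the two-sided estimate above with $C_{1}\le\rho_{k}$. Granting this, $\mathcal{R}$ is a near field refractor.

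\textbf{Part (b).} Take $x\in\varlimsup\mathcal{T}_{\mathcal{R}_{k}}(K)$. Then $x\in\mathcal{T}_{\mathcal{R}_{k_{j}}}(K)$ along a subsequence, with supports $\mathcal{E}(P_{j},b_{j})$ at $\rho_{k_{j}}(x)x$ and $P_{j}\in K$. The same compactness and limiting argument gives $P_{j}\to P_{0}\in K$, since $K$ is closed. The limit $\mathcal{E}(P_{0},b_{0})$ supports $\mathcal{R}$ at $\rho(x)x$, so $x\in\mathcal{T}_{\mathcal{R}}(K)$.

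\textbf{Part (c).} Let $x\in\mathcal{T}_{\mathcal{R}}(G)\setminus E$, so that $\mathcal{N}_{\mathcal{R}}(x)=\{P\}$ is a single point with $P\in G$. Suppose $x\notin\varliminf\mathcal{T}_{\mathcal{R}_{k}}(G)$. Then along a subsequence $\mathcal{N}_{\mathcal{R}_{k}}(x)\cap G=\emptyset$. Pick $P_{k}\in\mathcal{N}_{\mathcal{R}_{k}}(x)\subset\bar{D}\setminus G$, which is compact.

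By the argument in (a), a further subsequence gives $P_{k}\to P'\in\bar{D}\setminus G$ with $P'\in\mathcal{N}_{\mathcal{R}}(x)$. Then $P'\ne P$, which contradicts uniqueness. Lemma \ref{lem8} can be used to identify the limiting normal if one prefers to phrase this via refracted directions.
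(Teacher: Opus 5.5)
Your argument has the same structure as the paper's proof. In (a) and (b) you extract a convergent subsequence of $(P_k,b_k)$ from (\ref{11}) and pass to the limit in the two support relations. Your (c) is the pointwise version of the paper's complement argument: nonemptiness of $\mathcal{N}_{\mathcal{R}_k}(x)$, i.e.\ Lemma \ref{lem7}(a), followed by (b) applied to the compact set $\bar{D}\setminus G$. The one departure is that you notice the limit $(P_0,b_0)$ must be admissible in Definition \ref{def3}. The paper's proof of (a) ignores this, so there you are more careful than the paper. But you leave the point ``granted'', and the route you sketch for it does not deliver it.

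\textbf{Why the sketch fails.} Boundedness of $b_k$ is automatic from $|b_k|<|P_k|$ and compactness of $\bar D$. What (\ref{11}) adds is $b_k\le 2C_2-|P_k|$, which lies below $|P_k|$ only when $|P_k|>C_2$. So no uniform $\delta$ comes out of (\ref{11}) together with $C_1\le\rho_k$.

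\textbf{What does work.} You only need strict inequalities for the limit, assuming $P_0\neq 0$ (e.g.\ $0\notin\bar D$), and the two ends need different arguments.

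\emph{Lower end.} Your idea works here. If $b_0=-|P_0|$, then
\[
\rho_k(x_0)=\frac{(|P_k|-b_k)(|P_k|+b_k)}{2(x_0\cdot P_k-b_k)}\longrightarrow 0,
\]
which contradicts $\rho_k(x_0)\ge C_1$, \emph{provided} $x_0\cdot P_0>-|P_0|$. That is the constraint $x\cdot P/|P|>-1$ of Theorem \ref{thm1}; it is not a hypothesis of the lemma and has to be added.

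\emph{Upper end.} Here your idea fails: if $x_0=P_0/|P_0|$, then $\rho_k(x_0)\ge(|P_k|+b_k)/2\to|P_0|$, so the point $x_0$ yields no contradiction. Use instead that $\rho_k>0$ while $h(x,P_k,b_k)<0$ wherever $x\cdot P_k<b_k$. The support inequality therefore forces $x\cdot P_k\ge b_k$ on all of $\bar\Omega$. Then $b_0=|P_0|$ would give $x\cdot P_0\ge |P_0|$ on $\bar\Omega$, i.e.\ $\bar\Omega\subset\{P_0/|P_0|\}$, which is impossible for a domain.

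\emph{Positivity of $x\cdot P_0-b_0$.} Do not try to bound $x\cdot P_0-b_0$ away from $0$ on all of $\bar\Omega$. Your $C_2$ argument gives it at $x_0$. Elsewhere the limit can satisfy $x\cdot P_0=b_0$: take $\rho_k=\min\{h(\cdot,P,b_k),h(\cdot,P',b')\}$ with $b_k\uparrow\min_{\bar\Omega}x\cdot P$. At such points one must read $h(x,P_0,b_0)=+\infty$, which keeps the inequality of Definition \ref{def3} valid.
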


\begin{proof}
  $(a)$ From Lemma \ref{lem9}, by taking a subsequence if necessary, we may assume that there exist $P_{0} \in \bar{D}$ and $b_{0}$, such that $P_{k} \rightarrow P_{0}$ and $b_{k} \rightarrow b_{0}$ as $k \rightarrow \infty$. Hence we have
  $$\rho(x_{0}) = \lim\limits_{k \rightarrow \infty}\rho_{k}(x_{0}) = \lim\limits_{k \rightarrow \infty}h(x_{0},P_{k},b_{k}) = h(x_{0},P_{0},b_{0})$$
  and
  $$\rho(x) = \lim\limits_{k \rightarrow \infty}\rho_{k}(x) \leq \lim\limits_{k \rightarrow \infty}h(x,P_{k},b_{k}) = h(x,P_{0},b_{0})~\text{for all} ~x\in \bar{\Omega}.$$
  So $\mathcal{E}(P_{0},b_{0})$ supports $\mathcal{R}$ at $\rho(x_{0})x_{0}$, then $\mathcal{R}$ is a near field refractor.
  
  $(b)$  Let $x_{0} \in \varlimsup\limits_{k\rightarrow \infty} \mathcal{T}_{\mathcal{R}_{k}}(K)$. Without loss of generality, we assume that $x_{0} \in \mathcal{T}_{\mathcal{R}_{k_{j}}}(K)$ for $j = 1,2,\ldots$, then there exists $P_{k_{j}} \in K$ such that 
  $$\rho_{k_{j}}(x) \leq h(x,P_{k_{j}},b_{k_{j}}) \quad \text{and} \quad \rho_{k_{j}}(x_{0}) = h(x_{0},P_{k_{j}},b_{k_{j}})$$
  and
  $$x \cdot P_{k_{j}} >b_{k_{j}}.$$
  From (\ref{11}), we know that there exist $P_{0} \in K$ and $b_{0} > 0$, such that $P_{k_{j}} \rightarrow P_{0}$ and $b_{k_{j}} \rightarrow b_{0}$ as $j \rightarrow \infty$. From the proof of $(a)$, $\mathcal{E}(P_{0},b_{0})$ supports $\mathcal{R}$ at $\rho(x_{0})x_{0}$, hence $x_{0} \in \mathcal{T}_{\mathcal{R}}(P_{0})$, so $x_{0} \in \mathcal{T}_{\mathcal{R}}(K)$.
  
  $(c)$ Since $\mathcal{T}_{\mathcal{R}}(F^{c}) = [\mathcal{T}_{\mathcal{R}}(F)]^{c} \cup [\mathcal{T}_{\mathcal{R}}(F^{c}) \cap \mathcal{T}_{\mathcal{R}}(F)]$, from $(b)$, we have
  \begin{equation}\label{12}
  \begin{aligned}
    \varlimsup_{k \rightarrow \infty}[\mathcal{T}_{\mathcal{R}_{k}}(G)]^{c} & \subset \varlimsup_{k \rightarrow \infty}([\mathcal{T}_{\mathcal{R}_{k}}(G)]^{c} \cup [\mathcal{T}_{\mathcal{R}_{k}}(G) \cap \mathcal{T}_{\mathcal{R}_{k}}(G^{c})])\\
    & \subset \varlimsup_{k \rightarrow \infty}\mathcal{T}_{\mathcal{R}_{k}}(G^{c}) \subset \mathcal{T}_{\mathcal{R}}(G^{c}) \\
    & =[\mathcal{T}_{\mathcal{R}}(G)]^{c} \cup [\mathcal{T}_{\mathcal{R}}(G^{c}) \cap \mathcal{T}_{\mathcal{R}}(G)].
    \end{aligned}
  \end{equation}
  Since $\mathcal{T}_{\mathcal{R}}(G^{c}) \cap \mathcal{T}_{\mathcal{R}}(G) \subset E$, taking complements in (\ref{12}), we get
  $$\mathcal{T}_{\mathcal{R}}(G) \subset \varliminf_{k\rightarrow \infty}\mathcal{T}_{\mathcal{R}_{k}}(G) \cup E.$$
\end{proof}

\section{Definition of the weak solution}
\label{sec4}

\sloppy{}

In this section, we give the definition of the weak solution to the near field refraction problem for the critical case in negative refractive index medium. We first give the definition of refractor measure  originated from \cite{St17}, then the definition of the weak solution is given based on refractor measure.

\begin{definition}
  Suppose $\mathcal{R}$ is a refractor from $\bar{\Omega}$ to $\bar{D}$ for the case $\kappa = -1$, $f \in L^{1}(\bar{\Omega})$ and $\inf\limits_{\bar{\Omega}}f >0$. The refractor measure associated with $\mathcal{R}$ and $f$ is defined by a set function on Borel subsets of $\bar{D}$:
  \begin{equation}\label{13}
    G_{\mathcal{R}}(F) := \int_{\mathcal{T}_{\mathcal{R}}(F)}f(x)dx,
  \end{equation}
  where $dx$ is the surface measure on $S^{n-1}$.
  \label{def10}
\end{definition}

\begin{remark}
  $G_{\mathcal{R}}(F)$ is a finite Borel measure defined on $\mathcal{M}$, where $\mathcal{M}$ is defined in Lemma \ref{lem7} $(b)$.
  \label{rem11}
\end{remark}

Now we can define the weak solution to the near field refraction problem for the critical case in negative refractive index medium.

\begin{definition}
  Suppose that $\mu$ is a Radon measure on the Borel subset of $\bar{D}$ and $f \in L^{1}(\bar{\Omega})$, a refractor $\mathcal{R}$ is a weak solution to the near field refraction problem for the case $\kappa = -1$ with emitting illumination intensity $f(x)$ and prescribe refracted illumination intensity $\mu$ if for any Borel set $\omega \subset \bar{D}$, there holds:
  \begin{equation}\label{14}
    G_{\mathcal{R}}(\omega) = \int_{\mathcal{T}_{\mathcal{R}}(\omega)}f(x)dx = \mu(\omega).
  \end{equation}
  \label{def12}
\end{definition}

\section{Existence of the weak solution}
\label{sec5}

In this section, we discuss the existence of the weak solution to the near field refraction problem for the critical case in negative refractive index medium. We first give a lemma which is crucial for proving the existence of the weak solution for the discrete case. Then we prove the existence of the weak solution under the assumption that $\mu$ equals finite sum of $\delta$-measures. Finally, the main result of this paper, that is, the existence of the weak solution when $\mu$ is a finite Radon measure is proved by applying the previous existence result when $\mu$ equals to finite sum of $\delta$-measures.

\subsection{Existence of the weak solution when $\mu$ is discrete measure}
\label{sec5.1}

In this subsection, we assume that $\mu$ equals finite sum of $\delta$-measures, hence all rays are refracted into finite directions. Based on this assumption, we establish the existence of the weak solution to the near field refraction problem for the critical case in negative refractive index medium when $\mu$ is discrete measure.

\begin{remark}
Suppose that $P_{1},P_{2},\ldots, P_{m},~m \geq 2$ are discrete points in $\bar{D}$, then for $\mathbf{b} = (b_{1},b_{2},\cdots ,b_{m}) \in \mathbb{R}^{m}$, the refractor is defined as
  \begin{equation}\label{15}
    \mathcal{R}(\mathbf{b}) = \{\rho(x)x;~x\in \bar{\Omega},~\rho(x)=\min_{1\leq j \leq m}h(x,P_{j},b_{j})\},
  \end{equation}
  where
  \begin{equation*}
    h(x,P_{j},b_{j}) = \frac{b_{j}^{2} - \vert P_{j} \vert^{2}}{2(b_{j} - x \cdot P_{j})}.
  \end{equation*}
\label{rem13}
\end{remark}

Now we show the existence of the weak solution when $\mu$ equals finite sum of $\delta$-measures. 

\begin{theorem}
  Suppose that $f \in L^{1}(\bar{\Omega})$ with $\inf\limits_{\bar{\Omega}}f > 0$, $P_{1},P_{2},\ldots, P_{m},~m \geq 2$ are discrete points in $\bar{D}$, $g_{1}, g_{2}, \ldots, g_{m}>0$ and the condition $\displaystyle\int_{\bar{\Omega}}f(x)dx = \sum\limits_{j=1}^{m}g_{j}$ holds. Then for any $b_{1}$ satisfies the condition $b_{1} \leq x \cdot P_{1} - \dfrac{\vert P_{1} \vert^{2}}{\delta}$, where $\delta > 0$ satisfies $b_{j} + \vert P_{j} \vert \geq \delta >0$, there exists a unique $(b_{2}, \ldots, b_{m})$, such that $\mathcal{R} = \{\rho(x)x;~x \in \bar{\Omega}\}$ is the weak solution to the near field refraction problem for the case $\kappa = -1$, where $\rho(x) = \min\limits_{1 \leq j \leq m}h(x,P_{j},b_{j})$.
  \label{thm14}
\end{theorem}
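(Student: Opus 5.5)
We would follow the Minkowski-type iteration used by Stachura \cite{St17} and by Jiang and Sui \cite{JS26} for $\kappa<-1$ and $-1<\kappa<0$, adapted to the semi-hyperboloids $\mathcal{E}(P,b)$. The first step is to record the monotonicity of $h(x,P_j,b_j)$ in $b_j$. Differentiating gives
\[
\partial_{b}h(x,P,b)=\frac{(b-x\cdot P)^2-(|P|^2-(x\cdot P)^2)}{2(b-x\cdot P)^2}.
\]
The sign has to be fixed on the admissible range $\kappa|P|<b<|P|$ of Definition \ref{def3}, using the physical constraint $x\cdot P/|P|>-1$ of Remark \ref{rem2}. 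We expect $h$ to be monotone in $b$ there (decreasing toward the vertex), and we will fix the orientation accordingly. With this, set
\[
T_j(\mathbf{b})=\mathcal{T}_{\mathcal{R}(\mathbf{b})}(P_j),\qquad G_j(\mathbf{b})=\int_{T_j(\mathbf{b})}f\,dx .
\]
If $b_j$ is moved so that $h(\cdot,P_j,b_j)$ increases, then $G_j$ decreases and $G_i$ increases for every $i\neq j$. We also need that $T_i\cap T_j$ for $i\ne j$ has measure zero. This holds because two distinct hyperboloid graphs $h(\cdot,P_i,b_i)=h(\cdot,P_j,b_j)$ meet on the zero set of a nontrivial quadratic in $x$, which is a null set on $S^{n-1}$. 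Hence $\sum_j G_j(\mathbf{b})=\int_{\bar\Omega}f$, which is the setting of Remark \ref{rem13}.

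Next we fix $b_1$ as in the statement. The condition $b_1\le x\cdot P_1-|P_1|^2/\delta$ is meant to make $h(\cdot,P_1,b_1)$ small compared with the other ovals, so that initially the first oval captures almost all of $\bar\Omega$. We define
\[
W=\{(b_2,\dots,b_m):\ G_j(\mathbf{b})\le g_j,\ j=2,\dots,m\}.
\]
To show $W\neq\emptyset$, we take $b_j$ for $j\ge2$ so that $h(\cdot,P_j,b_j)>h(\cdot,P_1,b_1)$ on $\bar\Omega$. Then $T_j$ has measure zero and $G_j=0$. The hypothesis $b_j+|P_j|\ge\delta$ keeps the $b_j$ in the admissible range and gives uniform bounds $C_1\le\rho\le C_2$ as in \eqref{11}. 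These bounds in turn make $W$ bounded on the relevant side.

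Continuity of $\mathbf{b}\mapsto G_j(\mathbf{b})$ would follow from Lemma \ref{lem9}(b),(c): $\mathcal{R}(\mathbf{b}_k)\to\mathcal{R}(\mathbf{b})$ uniformly, and the singular set $E$ has measure zero. We then pick the extremal point of $W$, the one minimizing $\sum_{j\ge2} b_j$ in the direction that enlarges the $T_j$, using compactness of $W$ under the bounds above. If some $G_j<g_j$ at this point, we can push $b_j$ slightly further by continuity and remain in $W$. This works because moving $b_j$ only decreases $G_i$ for $i\ne j$. That contradicts extremality, so $G_j=g_j$ for $j\ge2$, and mass balance forces $G_1=g_1$.

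For uniqueness, we suppose $\mathbf{b}$ and $\mathbf{b}'$ are two solutions with the same $b_1$. Consider the index set where $h(\cdot,P_j,b'_j)/h(\cdot,P_j,b_j)$ is extremal and compare the trace sets, which gives a strict inclusion of positive measure, following the standard argument in \cite{St17}. Because $\inf f>0$, this contradicts equality of the $G_j$.

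The main obstacle is the first step: checking that $h$ is strictly monotone in $b$ on the admissible range for $\kappa=-1$, which differs from the ellipsoid and hyperboloid cases. Tied to it is showing that the chosen $b_1$ and $\delta$ give a nonempty, bounded $W$ with the $b_j$ staying in $(-|P_j|,|P_j|)$. This is where the hypothesis that $P_1$ is far from the other $P_j$ will probably be needed. If monotonicity fails globally, we would first try restricting to $b\in(-|P|,\min_{x} x\cdot P)$, where the sign of $\partial_b h$ can be read off directly.
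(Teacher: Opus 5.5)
Your scheme is the paper's. You define $W=\{(b_2,\dots,b_m):G_{\mathcal R(\mathbf b)}(P_j)\le g_j,\ j\ge2\}$, get a starting point from $h(\cdot,P_1,b_1)\le\delta/2\le\frac{b_j+|P_j|}{2}\le h(\cdot,P_j,b_j)$, minimize $\sum_{j\ge2}b_j$, and perturb one coordinate.

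\textbf{The gap: where the lower bound on $W$ comes from.} You need a lower bound on $W$ that makes the minimum attained \emph{and} leaves room to decrease $b_j$ at the minimizer. You take it from the hypothesis $b_j+|P_j|\ge\delta$ via \eqref{11}, but that inequality cannot hold on $W$ in general. Together with the condition on $b_1$, it gives $h(\cdot,P_1,b_1)\le h(\cdot,P_j,b_j)$ on all of $\bar\Omega$, hence $G_{\mathcal R(\mathbf b)}(P_j)=0$. So it holds at the starting point and fails at every weak solution. If instead you impose it as a constraint, the minimizer is pinned at $b_j=\delta-|P_j|$, where still $\rho=h(\cdot,P_1,b_1)$. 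Then ``push $b_j$ slightly further and stay in $W$'' is unavailable, and extremality yields no contradiction.

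The missing ingredient is mass balance. For every $\mathbf b\in W$, $G_{\mathcal R(\mathbf b)}(P_1)=\int_{\bar\Omega}f-\sum_{j\ge2}G_{\mathcal R(\mathbf b)}(P_j)\ge g_1>0$. So some $x_0\in\mathcal T_{\mathcal R(\mathbf b)}(P_1)$ satisfies $\frac{|P_1|+b_1}{2}\le h(x_0,P_1,b_1)\le h(x_0,P_j,b_j)$. Set $s_j=b_j+|P_j|$ and $m_j=\min_{\bar\Omega}(x\cdot P_j+|P_j|)$, which is positive by the constraint $x\cdot P_j/|P_j|>-1$ on the compact set $\bar\Omega$. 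Then $h(x_0,P_j,b_j)\le\frac{|P_j|s_j}{m_j-s_j}$ when $s_j<m_j$, hence $s_j\ge\frac{m_j(|P_1|+b_1)}{2|P_j|+|P_1|+b_1}$ uniformly on $W$. This is the mechanism of Lemma \ref{lem15}: the claim $g_1\le G_{\mathcal R(\mathbf b)}(P_1)$ in (a), used in (b). It is what keeps the minimizer strictly inside the admissible range.

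\textbf{Monotonicity.} The ``main obstacle'' you flag comes from a sign slip. In fact $\partial_b h=\frac{b^2-2b\,x\cdot P+|P|^2}{2(b-x\cdot P)^2}=\frac{(b-x\cdot P)^2+|P|^2-(x\cdot P)^2}{2(b-x\cdot P)^2}>0$, so $h$ is strictly increasing in $b$ wherever it is defined. Decreasing $b_j$ therefore enlarges $\mathcal T_{\mathcal R(\mathbf b)}(P_j)$, and the extremal point is the minimizer of $\sum_{j\ge2}b_j$. No restriction on $b$ and no separation of the $P_j$ is needed for this step.

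\textbf{Uniqueness.} The paper's proof does not address uniqueness at all. In your sketch, the ratio $h(x,P_j,b_j')/h(x,P_j,b_j)$ depends on $x$, so your ``extremal index set'' is not defined. Use instead $J=\{j:b'_j>b_j\}$, which is nonempty after possibly swapping $\mathbf b,\mathbf b'$, with $1\notin J$. Monotonicity gives $\bigcup_{j\in J}\mathcal T_{\mathcal R(\mathbf b')}(P_j)\subset A:=\bigcup_{j\in J}\mathcal T_{\mathcal R(\mathbf b)}(P_j)$, and both sets have $f$-mass $\sum_{j\in J}g_j$. Let $B:=\bigcup_{i\notin J}\mathcal T_{\mathcal R(\mathbf b)}(P_i)$. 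Because $\Omega$ is connected, there is a point every neighbourhood of which meets both $A$ and $B$ in positive measure. There one has $\min_{j\in J}h(\cdot,P_j,b'_j)>\min_{i\notin J}h(\cdot,P_i,b'_i)$ strictly, and this persists nearby. That removes a positive-measure part of $A$ from the inclusion, which contradicts $\inf f>0$ and the equality of masses.
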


In order to prove Theorem \ref{thm14}, we need the following Lemma \ref{lem15}.

\begin{lemma}
Suppose that $f \in L^{1}(\bar{\Omega})$ with $\inf\limits_{\bar{\Omega}}f>0$. Let $P_{1},P_{2},\ldots, P_{m},~m \geq 2$ are discrete points in $\bar{D}$, $g_{1}, g_{2}, \ldots, g_{m}>0$. Define a set $W := \{(b_{2}, \ldots, b_{m});~-\vert P_{j} \vert < b_{j} < \vert P_{j} \vert,~j=2,\ldots,m\}$, and for any $\mathbf{b} \in W$, we have $\displaystyle\int_{\mathcal{T}_{\mathcal{R}(\mathbf{b})}(P_{j})}f(x)dx \leq g_{i}, 2 \leq i \leq m$, then
  \newline (a) $W \neq \emptyset$;
  \newline (b) If $\mathbf{b} = (b_{2}, \ldots ,b_{m}) \in W$, then for $2 \leq j \leq m$, we have 
  \begin{equation*}
    b_{j} \geq x_{0} \cdot P_{j} - \frac{\vert P_{j} \vert^{2}}{\vert P_{1} \vert + b_{1}}.
  \end{equation*}
\label{lem15}
\end{lemma}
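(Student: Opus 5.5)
The plan follows the standard Minkowski/Kochengin--Oliker argument for near field problems, as in \cite{GH14} and \cite{St17}. Throughout, $b_{1}$ is fixed as in Theorem \ref{thm14}, and $W$ is read as the set of $(b_{2},\ldots,b_{m})$ with $-\vert P_{j}\vert<b_{j}<\vert P_{j}\vert$ such that $\int_{\mathcal{T}_{\mathcal{R}(\mathbf{b})}(P_{j})}f\,dx\leq g_{j}$ for $j=2,\ldots,m$. Two elementary facts about $h(x,P_{j},b_{j})=\frac{\vert P_{j}\vert^{2}-b_{j}^{2}}{2(x\cdot P_{j}-b_{j})}$ will be used repeatedly, always on the region where $x\cdot P_{j}>b_{j}$.
\begin{itemize}
\item As $b_{j}\to\vert P_{j}\vert$, the numerator is $(\vert P_{j}\vert-b_{j})(\vert P_{j}\vert+b_{j})\to 0$ while the denominator stays bounded. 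Hence the semi-hyperboloid $\mathcal{E}(P_{j},b_{j})$ degenerates toward the segment $OP_{j}$, and its visible set $\{x;\,x\cdot P_{j}>b_{j}\}$ shrinks to the direction $P_{j}/\vert P_{j}\vert$.
\item As $b_{j}\to-\vert P_{j}\vert$, the numerator also tends to $0$, but the denominator stays $\geq 2(b_{j}+\vert P_{j}\vert)$.
\end{itemize}
So the behaviour of $h$ near the endpoints must be tracked through the ratio, not just the numerator.

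\textbf{Part (a).} I will exhibit an explicit point of $W$. The idea is to choose each $b_{j}$, $j\geq2$, so close to $\vert P_{j}\vert$ that $h(x,P_{j},b_{j})>h(x,P_{1},b_{1})$ for all $x\in\bar\Omega$, except on a set of directions of measure zero.
\begin{itemize}
\item The hypothesis $x\cdot P_{j}/\vert P_{j}\vert>-1$ on $\bar\Omega$ ensures the denominators of $h(\cdot,P_{j},b_{j})$ do not vanish near $b_{j}=-\vert P_{j}\vert$. It also keeps every $h(\cdot,P_{j},b_{j})$ bounded below uniformly on $\bar\Omega$.
\item The constraint on $b_{1}$ from Theorem \ref{thm14}, $b_{1}\leq x\cdot P_{1}-\vert P_{1}\vert^{2}/\delta$, guarantees $x\cdot P_{1}>b_{1}$ on all of $\bar\Omega$. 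Together with $b_{1}+\vert P_{1}\vert\geq\delta$, it makes $h(\cdot,P_{1},b_{1})$ finite and bounded on $\bar\Omega$.
\end{itemize}
Choosing the $b_{j}$ this way gives $\rho=h(\cdot,P_{1},b_{1})$ off a null set. Then $\mathcal{T}_{\mathcal{R}(\mathbf b)}(P_{j})$ has measure zero, so its $f$-integral is $0\le g_{j}$, and $\mathbf b\in W$.

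\textbf{Part (b).} Argue by contradiction, using that $g_{j}>0$ but $f$ is bounded below. The constraint $\int_{\mathcal{T}(P_{j})}f\leq g_{j}$ alone gives no lower bound on $b_{j}$. The useful consequence is rather this: since $\int_{\bar\Omega}f=\sum g_{j}$, the set $\mathcal{T}(P_{1})$ has positive measure. In particular there is some $x_{0}$ with $\rho(x_{0})=h(x_{0},P_{1},b_{1})$, and I take this to be the $x_{0}$ in the statement. At $x_{0}$, minimality gives $h(x_{0},P_{j},b_{j})\geq h(x_{0},P_{1},b_{1})$ for every $j$.

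The computation of Lemma \ref{lem7} (the lower estimate in (\ref{11})) applies with $C_{1}$ replaced by $\rho(x_{0})$. It yields
\[
b_{j}\geq x_{0}\cdot P_{j}-\frac{\vert P_{j}\vert^{2}-b_{j}^{2}}{2h(x_{0},P_{1},b_{1})}\geq x_{0}\cdot P_{j}-\frac{\vert P_{j}\vert^{2}}{2h(x_{0},P_{1},b_{1})}.
\]
It then remains to bound $2h(x_{0},P_{1},b_{1})$ below by $\vert P_{1}\vert+b_{1}$. From the formula, $2h(x_{0},P_{1},b_{1})=\frac{(\vert P_{1}\vert-b_{1})(\vert P_{1}\vert+b_{1})}{x_{0}\cdot P_{1}-b_{1}}$. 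Since $x_{0}\cdot P_{1}\leq\vert P_{1}\vert$, the ratio $\frac{\vert P_{1}\vert-b_{1}}{x_{0}\cdot P_{1}-b_{1}}$ is at least $1$, which gives $2h\geq\vert P_{1}\vert+b_{1}$ and hence the claimed inequality.

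\textbf{Main obstacle.} The hardest step will be part (a): making the choice of $b_{j}$ near $\vert P_{j}\vert$ rigorous. As $b_{j}\to\vert P_{j}\vert$, $h(x,P_{j},b_{j})\to0$ for $x$ in the shrinking visible cone, so the $j$-th sheet is not automatically above sheet $1$ there. I will instead use that this cone shrinks to the single direction $P_{j}/\vert P_{j}\vert$. By absolute continuity of $f\,dx$, the measure of $\mathcal{T}(P_{j})$, which is contained in that cone, tends to $0$ and is eventually $\leq g_{j}$. This is the route I will take.
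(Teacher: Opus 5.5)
Your proof is correct. Part (b) is the paper's argument: at a point $x_0$ where the first sheet is active, $\frac{\vert P_1\vert+b_1}{2}\le h(x_0,P_1,b_1)=\rho(x_0)\le h(x_0,P_j,b_j)$, and one solves for $b_j$. Your reason why such an $x_0$ exists is better than the paper's bare appeal to $\mathcal{R}(\mathbf b)$ being a refractor: the mass balance together with $\mathbf b\in W$ gives $G_{\mathcal{R}(\mathbf b)}(P_1)\ge g_1>0$. Note two things. The identity $\int_{\bar\Omega}f\,dx=\sum_j g_j$ comes from Theorem~\ref{thm14}, not from the lemma's hypotheses (the paper uses it too). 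And the argument is direct, so the ``argue by contradiction'' framing should go.

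Part (a) takes a genuinely different route. The paper forces $\mathcal{R}(\mathbf b)=\mathcal{E}(P_1,b_1)$ via the chain $h(x,P_1,b_1)\le\delta/2\le(b_j+\vert P_j\vert)/2\le h(x,P_j,b_j)$, so every trace $\mathcal{T}(P_j)$, $j\ge2$, is null.
\begin{itemize}
\item This gives an explicit ``all light to $P_1$'' starting configuration.
\item It couples $b_1$ and the $b_j$ through $\delta$. Since $b_j+\vert P_j\vert<2\vert P_j\vert$ and $b_1>-\vert P_1\vert$ is required, such a $\delta$ exists only if $\vert P_1\vert^2<2\vert P_j\vert\min_{\bar\Omega}(x\cdot P_1+\vert P_1\vert)$.
\end{itemize}
Your final route lets $b_j\uparrow\vert P_j\vert$. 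The visibility cone $\{x\cdot P_j>b_j\}$ contains $\mathcal{T}_{\mathcal{R}(\mathbf b)}(P_j)$ and decreases to the null set $\{P_j/\vert P_j\vert\}$. Continuity from above of the finite measure $f\,dx$, together with $g_j>0$, then gives $\int_{\mathcal{T}(P_j)}f\,dx\le g_j$. This chooses each $b_j$ independently and only needs $x\cdot P_1>b_1$ on $\bar\Omega$, so it is more robust than the paper's route. The price is that the traces are small rather than null, which is all $W$ requires. You should state explicitly that $\mathcal{T}_{\mathcal{R}(\mathbf b)}(P_j)$ is identified with $\{\rho=h(\cdot,P_j,b_j)\}$ up to a null set, as the paper does implicitly; the containment in the cone depends on this.

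The geometric remarks behind your ``main obstacle'' are wrong, although the final argument never uses them.
\begin{itemize}
\item On the visible set, $0<x\cdot P_j-b_j\le\vert P_j\vert-b_j$, hence $h(x,P_j,b_j)\ge\frac{\vert P_j\vert+b_j}{2}\to\vert P_j\vert$. So $h$ does not tend to $0$.
\item The sheet $\vert z\vert-\vert z-P_j\vert=b_j$ collapses onto the ray $\{tP_j;\,t\ge1\}$ beyond $P_j$, not onto the segment $OP_j$.
\end{itemize}
With the correct bound, the first idea you abandoned works whenever $\sup_{\bar\Omega}h(\cdot,P_1,b_1)<\vert P_j\vert$; that is essentially the paper's route. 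Your cone argument avoids even this condition.

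Finally, drop the appeal to $b_1+\vert P_1\vert\ge\delta$. Combined with $b_1\le x\cdot P_1-\vert P_1\vert^2/\delta$ on $\bar\Omega$, it gives $\delta+\vert P_1\vert^2/\delta\le2\vert P_1\vert$, which forces $\bar\Omega\subset\{P_1/\vert P_1\vert\}$. The bound $h(\cdot,P_1,b_1)\le\delta/2$ already follows from the second condition alone.
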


\begin{proof}
  $(a)$ We first claim that if $\mathbf{b} = (b_{2},\ldots,b_{m}) \in W$, then $0 < g_{1} \leq G_{\mathcal{R}(\mathbf{b})}(P_{1})$.
  
  Indeed, for we have $\displaystyle\int_{\bar{\Omega}}f(x)dx = \sum\limits_{j=1}^{m}g_{j}$, then 
  \begin{equation*}
    \sum_{j=1}^{m}G_{\mathcal{R}(\mathbf{b})}(P_{j})  = \sum_{j=1}^{m}\int_{\mathcal{T}_{\mathcal{R}(\mathbf{b})}(P_{j})}f(x)dx = \int_{\Omega}f(x)dx = \sum_{j=1}^{m}g_{j}.
  \end{equation*}
  Hence we have
  \begin{equation*}
    g_{1} - G_{\mathcal{R}(\mathbf{b})}(P_{1}) + \sum_{j=2}^{m}(g_{j} - G_{\mathcal{R}(\mathbf{b})}(P_{j})) = 0.
  \end{equation*}
  Since $\mathbf{b} = (b_{2},\ldots,b_{m}) \in W$, then $g_{j} - G_{\mathcal{R}(\mathbf{b})}(P_{j}) \geq 0$ for $j = 2,\ldots,m$, hence $g_{1} - G_{\mathcal{R}(\mathbf{b})}(P_{1}) \leq 0$. So we have $g_{1} \leq G_{\mathcal{R}(\mathbf{b})}(P_{1})$.
  
  For any $b_{1}$ satisfies the condition $b_{1} \leq x \cdot P_{1} - \dfrac{\vert P_{1} \vert^{2}}{\delta}$, we then claim that $\mathcal{R}(\mathbf{b}) = \mathcal{E}(P_{1},b_{1}) = \{h(x,P_{1},b_{1})x;~x \in S^{n-1},x \cdot P_{1} > b_{1}\}.$
  
  Indeed, for we have
  \begin{equation*}
    h(x,P_{1},b_{1})  \leq \frac{\vert P_{1} \vert^{2}}{2(x \cdot P_{1} - b_{1})}  \leq \frac{\delta \vert P_{1} \vert^{2}}{2\vert P_{1} \vert^{2}} \leq \frac{b_{j} + \vert P_{j} \vert}{2} \leq h(x,P_{j},b_{j}).
  \end{equation*}
  So we have $\mathcal{R}(\mathbf{b}) = \mathcal{E}(P_{1},b_{1})$. Then for $j \neq 1$, we have $G_{\mathcal{R}(\mathbf{b})}(P_{j}) = 0 < g_{j}$, hence $W \neq \emptyset$.
  
  $(b)$ Since $\mathcal{R}(\mathbf{b})$ is a near field refractor, then there exists $x_{0} \in \bar{\Omega}$, such that $\mathcal{E}(P_{1},b_{1})$ supports $\mathcal{R}(\mathbf{b})$ at $\rho(x_{0})x_{0}$. Then we have
  $$\frac{\vert P_{1} \vert + b_{1}}{2} \leq \rho(x_{0}) \leq h(x_{0},P_{j},b_{j}) \leq \frac{\vert P_{j} \vert^{2}}{2(x_{0} \cdot \vert P_{j} \vert - b_{j})}.$$
  Hence
  $$\frac{1}{\vert P_{1} \vert + b_{1}} \geq \frac{x_{0} \cdot P_{j}}{\vert P_{j} \vert^{2}} - \frac{b_{j}}{\vert P_{j} \vert^{2}},$$
  from which we deduce
  $$ b_{j} \geq x_{0} \cdot P_{j} - \frac{\vert P_{j} \vert^{2}}{\vert P_{1} \vert + b_{1}}.$$
\end{proof}

\begin{remark}
From Theorem 2.5 in \cite{GH14}, we know that $W$ is compact.
\label{rem16}
\end{remark}

Based on the above lemma, now we prove the existence of the weak solution for the critical case when $\mu$ equals finite sum of $\delta$-measures.

\begin{proof}[Proof of Theorem \ref{thm14}]
  We consider the mapping $d:W \rightarrow \mathbb{R}:~(b_{2},\ldots,b_{m}) \mapsto b_{2} + \ldots + b_{m}$. Since $W$ is compact, $d$ can attain its minimum at some points in $W$. Suppose that $d$ attains its minimum at $(a_{2},\ldots,a_{m})$, we claim that $\mathcal{R}(\mathbf{a})$ is the weak solution of the near field refraction problem, where $\mathbf{a} = (b_{1},a_{2},\ldots,a_{m})$. In particular, $\mathcal{R}(\mathbf{a}) = \{\rho_{\mathbf{a}}(x)x;~x\in \bar{\Omega},\rho_{\mathbf{a}}(x) = \min\limits_{1 \leq j \leq m}h(x,P_{j},a_{j})\}$.
  
  Indeed, without loss of generality, we assume for contradiction that $\mathcal{R}(\mathbf{a})$ isnot the weak solution of the near field refraction problem, then $G_{\mathcal{R}(\mathbf{a})}(P_{2}) < g_{2}$. Let $\varepsilon > 0$ and define $\bar{\mathbf{a}} = (b_{1},a_{2}-\varepsilon,\ldots,a_{m})$, the corresponding semi-hyperboloid is denoted as $\mathcal{R}(\bar{\mathbf{a}})$. From the continuity of $G_{\mathcal{R}}$, there holds $G_{\mathcal{R}(\bar{\mathbf{a}})}(P_{2}) < g_{2}$ for $\varepsilon$ is sufficiently small.
  
  Now we claim that $\mathcal{T}_{\mathcal{R}(\bar{\mathbf{a}})}(P_{j}) \subset \mathcal{T}_{\mathcal{R}(\mathbf{a})}(P_{j})$ holds almost everywhere for $j \geq 3$.
  
  Indeed, let $x_{0} \in \mathcal{T}_{\mathcal{R}(\bar{\mathbf{a}})}(P_{j})$ and $x_{0}$ isnot the singular point of the surface, then $P_{j} = \mathcal{N}_{\mathcal{R}(\mathbf{a})}(x_{0})$. From the definition of the refractor mapping (\ref{7}), there exists $\mathcal{E}(P_{j},b_{j})$ supports $\mathcal{R}(\bar{\mathbf{a}})$ at $\rho_{\bar{\mathbf{a}}}(x_{0})x_{0}$. Then we have
  $$\rho_{\bar{\mathbf{a}}}(x) \leq h(x,P_{j},\bar{a}_{j}) \quad \text{and} \quad \rho_{\bar{\mathbf{a}}}(x_{0}) \leq h(x_{0},P_{j},\bar{a}_{j}).$$
  
  Now we turn to prove that $\rho_{\bar{\mathbf{a}}}(x_{0}) = h(x_{0},P_{j},a_{j})$.
  
  Indeed, assume for contradiction that there exists $i \neq j$ with $1 \leq j \leq m$, such that $\rho_{\bar{\mathbf{a}}}(x_{0}) = h(x_{0},P_{j},\bar{a}_{j}) < h(x_{0},P_{i},a_{i})$, then we have $P_{i} \in \mathcal{N}_{\mathcal{R}(\mathbf{a})}(x_{0})$, hence $P_{i} = P_{j}$. This is a contradiction, so we have $\rho_{\bar{\mathbf{a}}}(x_{0}) = h(x_{0},P_{j},a_{j})$.
  
  Then we claim that $P_{j} \in \mathcal{N}_{\mathcal{R}(a)}(x_{0})$.
  
  Indeed, from the definition of $\rho_{\mathbf{a}}(x)$, we have $\rho_{\bar{\mathbf{a}}}(x) \leq \rho_{\mathbf{a}}(x) \leq h(x,P_{j},a_{j})$. Then we have
  $$\rho_{\bar{\mathbf{a}}}(x_{0}) \leq \rho_{\mathbf{a}}(x_{0}) \leq h(x_{0},P_{j},a_{j}) = \rho_{\bar{\mathbf{a}}}(x_{0}).$$
  Hence $\rho_{\mathbf{a}}(x_{0}) = \rho_{\bar{\mathbf{a}}}(x_{0})$, so we have $P_{j} \in \mathcal{N}_{\mathcal{R}(a)}(x_{0})$.
  
  Combine with above two conclusions, we have $\mathcal{T}_{\mathcal{R}(\bar{\mathbf{a}})}(P_{j}) \subset \mathcal{T}_{\mathcal{R}(\mathbf{a})}(P_{j})$, hence $G_{\mathcal{R}(\bar{\mathbf{a}})}(P_{j}) \leq G_{\mathcal{R}(\mathbf{a})}(P_{j}) \leq g_{j}$ for $j \geq 3$, so $\bar{\mathbf{a}} \in W$. This is a contradiction with $\mathbf{a}$ is the minimum of $d$. So $\mathcal{R}(\mathbf{a})$ is the weak solution of the near field refraction problem.
\end{proof}

\subsection{Existence of the weak solution when $\mu$ is finite Radon measure}
\label{sec5.2}

In this subsection, based on Theorem \ref{thm14}, we prove the existence of the weak solution to the near field refraction problem for the critical case.

\begin{proof}[Proof of Theorem \ref{thm1}]
  In order to prove the existence of the weak solution for the critical case, we need to show that given $P_{0} \in supp(\mu)$, then for any Borel set $F \subset D$ with $P_{0} \notin F$, there exists a refractor $\mathcal{R}$, such that $\displaystyle\int_{\mathcal{T}_{\mathcal{R}}(F)}f(x)dx = \mu(F)$, where $\mu$ is the Radon measure defined on $\bar{D}$.
  
  We fix $-\vert P_{1} \vert < b_{1} \leq x\cdot P_{1} - \dfrac{\vert P_{1} \vert^{2}}{\delta}$, where $\delta$ is given in Theorem \ref{thm14}, and divide $D$ into a finite, disjoint union of Borel set $\{E_{j}\}$ with nonempty interiors. For each $E_{j}$, we have $diam(E_{j}) \leq \varepsilon$, where $\varepsilon$ is sufficient small. Suppose that $P_{0}$ belongs to the interior of some $E_{k}$. For $P_{0} \in supp(\mu)$, then $\mu(E_{k}) > 0$. We discard the sets in $E_{j}$ which makes $\mu(E_{j}) = 0$, and label the remaining sets as $F_{1}^{1},\ldots,F_{m_{1}}^{1}$. Without loss of generality, we assume that $P_{0} \in (F_{1}^{1})^{\circ}$ and $\mu(F_{j}^{1}) > 0$. Taking $P_{j}^{1} \in F_{j}^{1}$, then we have $P_{1}^{1} = P_{0}$.
  
  Defining a measure on $\bar{D}$ as $\mu_{1} = \sum\limits_{j=1}^{m_{1}}\mu(F_{j}^{1})\delta_{P_{j}^{1}}$, then we have
  $$\mu_{1}(\bar{D}) = \mu(\bar{D}) = \displaystyle\int_{\bar{\Omega}}f(x)dx.$$
  From Theorem \ref{thm14}, there exists a near field refractor $\mathcal{R}_{1} = \{\rho_{1}(x)x;~\rho_{1}(x) = \min\limits_{1 \leq j \leq m_{1}}h(x,P_{j}^{1},b_{j})\}$, such that $\mu_{1}(F) = \displaystyle\int_{\mathcal{T}_{\mathcal{R}_{1}}(F)}f(x)dx$.
  
  Next, we divide each $F_{j}^{1}$ into finite, disjoint union of Borel sets $\{E_{j}^{'}\}$ with nonempty interiors. For each $E_{j}^{'}$, we have $diam(E_{j}^{'}) \leq \dfrac{\varepsilon}{2}$, where $\varepsilon$ is sufficient small. Suppose that $P_{0}$ belongs to the interior of some $E_{k}^{'}$. For $P_{0} \in supp(\mu)$, then $\mu(E_{k}^{'}) > 0$. We discard the sets in $E_{j}^{'}$ which makes $\mu(E_{j}^{'}) = 0$, and label the remaining sets as $F_{1}^{2},\ldots,F_{m_{2}}^{2}$. Without loss of generality, we assume that $F_{1}^{2} \subset F_{1}^{1}$ and $P_{0} \in (F_{1}^{2})^{\circ}$. Taking $P_{j}^{2} \in F_{j}^{2}$, then we have $P_{1}^{2} = P_{0}$.
  
  Defining a measure on $\bar{D}$ as $\mu_{2} = \sum\limits_{j=1}^{m_{2}}\mu(F_{j}^{2})\delta_{P_{j}^{2}}$, then we have
  $$\mu_{2}(\bar{D}) = \mu(\bar{D}) = \displaystyle\int_{\bar{\Omega}}f(x)dx.$$
  From Theorem \ref{thm14}, there exists a near field refractor $\mathcal{R}_{2} = \{\rho_{2}(x)x;~\rho_{2}(x) = \min\limits_{1 \leq j \leq m_{2}}h(x,P_{j}^{2},b_{j})\}$, such that $\mu_{2}(F) = \displaystyle\int_{\mathcal{T}_{\mathcal{R}_{2}}(F)}f(x)dx$.
  
  Continuing in this way, we obtain a sequence of disjoint Borel sets $\{F_{j}^{l}\}$, where $1 \leq j \leq m_{l}$ and $diam(F_{j}^{l}) \leq \dfrac{\varepsilon}{2^{l}}$. For any $F_{j}^{l}$, we have $\mu(F_{j}^{l}) > 0$ with $P_{0} \in (F_{j}^{l})^{\circ}$ and $F_{1}^{l + 1} \subset F_{1}^{l}$, we take $P_{j}^{l} \in F_{j}^{l}$ such that $P_{1}^{l} = P_{0}$ holds for any $j$ and $l$. Then the measure $\mu_{l} = \sum\limits_{j=1}^{m_{l}}\mu(F_{j}^{l})\delta_{P_{j}^{l}}$ satisfies 
   $$\mu_{l}(\bar{D}) = \mu(\bar{D}) = \displaystyle\int_{\bar{\Omega}}f(x)dx.$$
  From Theorem \ref{thm14}, there exists a near field refractor $\mathcal{R}_{l} = \{\rho_{l}(x)x;~\rho_{l}(x) = \min\limits_{1 \leq j \leq m_{l}}h(x,P_{j}^{l},b_{j})\}$, such that $\mu_{l}(F) = \displaystyle\int_{\mathcal{T}_{\mathcal{R}_{l}}(F)}f(x)dx$.
  
 Let $x_{0},x_{1} \in \bar{\Omega}$ and suppose that $\mathcal{E}(P_{0},b_{0})$ supports $\mathcal{R}_{l}$ at $\rho(x_{0})x_{0}$. Since $x \cdot P_{0} > b_{0}$, then there exists $\alpha > 0$, such that $x \cdot P_{0} - b_{0} \geq \alpha > 0$, then we have
 \begin{equation*}
   \begin{aligned}
   \vert \rho_{l}(x_{1}) - \rho_{l}(x_{0})\vert & \leq \left\vert \frac{\vert P_{0} \vert^{2} - b_{0}^{2}}{2(x_{1} \cdot P_{0} - b_{0})} - \frac{\vert P_{0} \vert^{2} - b_{0}^{2}}{2(x_{0} \cdot P_{0} - b_{0})} \right\vert \\
   & \leq \left\vert \frac{(\vert P_{0} \vert^{2} - b_{0}^{2}) \vert P_{0} \vert}{2(x_{1} \cdot P_{0} - b_{0})(x_{0} \cdot P_{0} - b_{0})} \right\vert \Vert x_{1} - x_{0} \Vert \\
   & \leq \frac{\vert P_{0}\vert^{3}}{2 \alpha^{2}}\Vert x_{1} - x_{0} \Vert.
   \end{aligned}
 \end{equation*}
 Exchanging the roles of $x_{0}$ and $x_{1}$, then we get $\vert \rho_{l}(x_{0}) - \rho_{l}(x_{1})\vert \leq \dfrac{\vert P_{0}\vert^{3}}{2 \alpha^{2}}\Vert x_{0} - x_{1} \Vert$. Hence $\{\rho_{l};~l \geq 1\}$ is a family of  uniformly bounded and equicontinuous functions. Then from Arezlà-Ascoli Theorem, $\rho_{l} \rightarrow \rho$ uniformly on $\bar{\Omega}$.
 
 Then we need to prove that $\mathcal{R} = \{\rho(x)x;~x\in \bar{\Omega}\}$ is a near field refractor.
 
 Indeed, fix $x_{0} \in \bar{\Omega}$, then there exist $P_{l}$ and $b_{l}$, such that $\mathcal{E}(P_{l},b_{l})$ supports $\mathcal{R}_{l}$ at $\rho(x_{0})x_{0}$, namely
  $$\rho_{l}(x_{0}) = h(x_{0},P_{l},b_{l}) \quad \text{and} \quad \rho_{l}(x) \leq h(x,P_{l},b_{l}) \quad \text{for all}~x \in \bar{\Omega}.$$
  From (\ref{11}), we have
  $$ x \cdot P_{l} - \frac{\vert P_{l} \vert^{2}}{2C_{1}} \leq b_{l} \leq 2C_{2} - \vert P_{l} \vert,$$
  then there exist $P_{0}$ and $b_{0}$, such that $P_{l} \rightarrow P_{0}$ and $b_{l} \rightarrow b_{0}$ as $l \rightarrow \infty$. 
  
  Now we need to verify that $\mathcal{E}(P_{0},b_{0})$ supports $\mathcal{R}$ at $\rho(x_{0})x_{0}$. For we have
  \begin{equation*}
    \rho(x_{0}) = \lim\limits_{l\rightarrow \infty}\rho_{l}(x_{0}) = \lim\limits_{l\rightarrow \infty}h(x_{0},P_{l},b_{l}) = h(x_{0},P_{0},b_{0})
  \end{equation*}
  and
  $$\rho(x) = \lim\limits_{l\rightarrow \infty}\rho_{l}(x) \leq \lim\limits_{l\rightarrow \infty}h(x,P_{l},b_{l}) = h(x,P_{0},b_{0}) \quad \text{for any}~ x \in \bar{\Omega}.$$
  Hence $\mathcal{E}(P_{0},b_{0})$ supports $\mathcal{R}$ at $\rho(x_{0})x_{0}$.
  
  Finally, we need to prove that $G_{\mathcal{R}_{l}}(F) = \displaystyle\int_{\mathcal{T}_{\mathcal{R}_{l}}(F)}f(x)dx$ converges to $G_{\mathcal{R}}(F) = \displaystyle\int_{\mathcal{T}_{\mathcal{R}}(F)}f(x)dx$ weakly as $l \rightarrow \infty$. Namely, $\varlimsup\limits_{l \rightarrow \infty}G_{\mathcal{R}_{l}}(F) \leq G_{\mathcal{R}}(F)$ holds for any compact sets $F \subset \bar{D}$ and $G_{\mathcal{R}}(U) \leq \varliminf\limits_{l \rightarrow \infty}G_{\mathcal{R}_{l}}(U)$ holds for any open sets $U \subset \bar{D}$.
  
  Indeed, for any compact set $F$, we have
  \begin{equation*}
    \begin{aligned}
    \varlimsup_{l \rightarrow \infty}G_{\mathcal{R}_{l}}(F) & = \varlimsup_{l \rightarrow \infty}\int_{\mathcal{T}_{\mathcal{R}_{l}}(F)}f(x)dx \\
    & \leq \int_{\bar{\Omega}}\varlimsup_{l \rightarrow \infty}\chi_{\mathcal{T}_{\mathcal{R}_{l}}(F)}(x)f(x)dx \\
    & \leq \int_{\mathcal{T}_{\mathcal{R}}(F)}f(x)dx = G_{\mathcal{R}}(F).
    \end{aligned}
  \end{equation*}
  For any open set $U$, we have
  \begin{equation*}
    \begin{aligned}
    G_{\mathcal{R}}(U) & = \int_{\mathcal{T}_{\mathcal{R}}(U)}f(x)dx \\
    & \leq \int_{\bar{\Omega}}\varliminf_{l \rightarrow \infty}\chi_{\mathcal{T}_{\mathcal{R}_{l}}(U)}(x)f(x)dx \\
    & \leq \varliminf_{l \rightarrow \infty}\int_{\bar{\Omega}}\chi_{\mathcal{T}_{\mathcal{R}_{l}}(U)}(x)f(x)dx \\
    & = \varliminf_{l \rightarrow \infty}G_{\mathcal{R}_{l}}(U).
  \end{aligned}
  \end{equation*}
  So we have $G_{\mathcal{R}_{l}}(F) \rightarrow G_{\mathcal{R}}(F)$ weakly as $l \rightarrow \infty$.
  
  For we also have $\mu_{l} \rightarrow \mu$ weakly as $l \rightarrow \infty$, so $\mu(F) = G_{\mathcal{R}}(F)$, hence the weak solution to the near field refraction problem for the critical case exists.
\end{proof}

\section{Conclusion}
\label{sec6}

In this paper, we studied the near field refraction problem for the critical case in negative refractive index material. Snell's law in vector form was first reviewed, and the physical constraints governing refraction were discussed. The refractor for the critical case was then defined, and several of its important properties were analyzed. Using these results, we introduced the notion of the weak solution to the near field refraction problem for the critical case and finally proved the existence of the weak solution. We note that when $\kappa = −1$, the reflected and refracted rays are collinear and oppositely directed, hence it suffices to consider the near field refraction problem, and the far field case needs no separate treatment.

This paper used the Minkowski method to solve the near field refraction problem for the critical case in negative refractive index material. Minkowski method is an iterative approach for solving refraction and reflection problems in geometric optics, which is effective in solving near field refraction problem. The existence of the weak solution to the near field refraction problem for the case $\kappa < -1$ and $-1 < \kappa < 0$ can be proved by using generated Jacobian equation \cite{Tr14}. Can generated Jacobian equation be used to prove the existence of the weak solution to the near field refraction problem for the critical case is still an open problem.

\vspace{3mm}

\noindent {\bf Conflict of Interest}  { The authors declare no conflict of interest.}

\vspace{3mm}

\noindent {\bf Acknowledgments}  { This work was supported by the National Natural Science Foundation of China (No. 12271093), the Jiangsu Provincial Scientific Research Center of Applied Mathematics (No. BK20233002) and Shanghai Institute for Mathematics and Interdisciplinary Sciences (SIMIS) under grant number SIMIS-ID-2025-AD.


\begin{thebibliography}{99}\addtolength{\itemsep}{-1.5ex}\footnotesize

\bibitem{CT05} S. Collardey, A.C. Tarot, P. Pouliguen, K. Mahdjoubi. Use of electromagnetic band‐gap materials for RCS reduction. Microw Opt. Techn. Let. 2005;44(6):546-550.

\bibitem{ET02} S. Enoch, G. Tayeb, P. Sabouroux, N. Guérin, P. Vincent. A metamaterial for directive emission. Phys. Rev. Lett. 2002;89(21):213902.

\bibitem{GH14} C.E. Gutiérrez, Q. Huang. The near field refractor. Ann Inst H Poincaré Anal Non Linéaire 2014;31(4):655-684.

\bibitem{GM21} C.E. Gutiérrez, H. Mawi. On the numerical solution of the near field refractor problem. Appl Math Optim 2021;84(Suppl 2):1877-1902.

\bibitem{GS15} C.E. Gutiérrez, E. Stachura. Uniform refraction in negative refractive index materials. J Opt Soc Am A 2015;32(11):2110-2122.

\bibitem{GS16} C.E. Gutiérrez, E. Stachura. Metamaterial lens design. J Opt Soc Am A 2016;33(10):2020-2026.

\bibitem{GT15} C.E. Gutiérrez, F. Tournier. Regularity for the near field parallel refractor and reflector problems. Calc Var Partial Differential Equations 2015;54(1):917-949.
    
\bibitem{GT21} C.E. Gutiérrez, F. Tournier. $C^{1,\alpha}$-estimates for the near field refractor. Ann Inst H Poincaré Anal Non Linéaire 2021;38(3):577-600.

\bibitem{Ji23} F. Jiang, Weak solutions of generated Jacobian equations, Math. Eng. 2023,5(3):1-20.

\bibitem{JT14} F. Jiang, N.S. Trudinger. On Pogorelov estimates in optimal transportation and geometric optics, Bull. Math. Sci. 2014,4(3): 407-431.

\bibitem{JT18} F. Jiang, N.S. Trudinger. On the second boundary value problem for Monge–Ampère type equations and geometric optics, Arch. Ration. Mech. Anal. 2018,229: 547-567.

\bibitem{JS26} F. Jiang, H. Sui. Near field refraction problem with loss of energy in negative refractive index material, Preprint arXiv 2603.10483v1.

\bibitem{LH04} L. Liu, S. He. Near-field optical storage system using a solid immersion lens with a left-handed material slab. Opt. Express 2004;12(20):4835-4840.

\bibitem{LW21} J. Liu, X.J. Wang, Light refraction is nonlinear optimisation. J. Math. Study 2021;54(2):142-163.

\bibitem{PR02} J.B. Pendry, S.A. Ramakrishna. Near-field lenses in two dimensions. J. Phys-condens. Mat. 2002;14(36):8463.

\bibitem{Pen00} J.B. Pendry. Negative refraction makes a perfect lens. Phys. Rev. Lett. 2000;85(18):3966.

\bibitem{SP00} D.R. Smith, W.J. Padilla, D.C. Vier, S.C. Nemat-Nasser, S. Schultz. Composite medium with simultaneously negative permeability and permittivity. Phys Rev Lett 2000;84(18):4184.

\bibitem{SS01} R.A. Shelby, D.R. Smith, S. Schultz. Experimental verification of a negative index of refraction. Science 2001;292(5514):77-79.

\bibitem{St17} E. Stachura. Existence of weak solutions to refraction problems in Negative Refractive Index Materials. Nonlinear Anal 2017;157:76-103.

\bibitem{Tr14} N.S. Trudinger. On the local theory of prescribed Jacobian equations. Discrete Contin. Dyn. Syst. 2014;34(4):1663-1681.

\bibitem{Tr21} N.S. Trudinger. On the local theory of prescribed Jacobian equations revisited. Math. Eng. 2021;3(6):1-17.

\bibitem{Ve68} V.G. Veselago, The electrodynamics of substances with simultaneously negative values of $\epsilon$ and $\mu$, Phys. Usp., \textbf{10(4)} (1968), 509-514.
		
		
\end{thebibliography}
\end{document}